\def\lf{\left}
\def\ri{\right}
\def\p{\partial}
\def\R{\mathbb{R}}
\def\pd#1#2{\frac{\partial #1}{\partial #2}}
\def\vv<#1>{\langle#1\rangle}
\def\ol{\overline}
\def\XXint#1#2{\setbox0=\hbox{$#1{#2}{\int}$}{#2}\kern-.5\wd0 }
\def\XXint#1#2#3{{\setbox0=\hbox{$#1{#2#3}{\int}$}
     \vcenter{\hbox{$#2#3$}}\kern-.5\wd0}}
\def\vv<#1>{{\left\langle#1\right\rangle}}
\def\br#1{{\bar{#1}}}
\newtheorem{thm}{Theorem}[section]
\newtheorem{lem}{Lemma}[section]
\newtheorem{cor}{Corollary}[section]
\theoremstyle{definition}
\theoremstyle{remark}
\numberwithin{equation}{section}
\begin{document}
\title{Line segment energy and applications}

\author{Yulian Chen}
\address{Department of Mathematics, Shantou University, Shantou, Guangdong, 515063, China}
\email{13ylchen3@stu.edu.cn}
\author{Chengjie Yu$^1$}
\address{Department of Mathematics, Shantou University, Shantou, Guangdong, 515063, China}
\email{cjyu@stu.edu.cn}
\thanks{$^1$Research partially supported by a supporting project from the Department of Education of Guangdong Province with contract no. Yq2013073, and NSFC 11571215.}
\renewcommand{\subjclassname}{%
  \textup{2010} Mathematics Subject Classification}
\subjclass[2010]{Primary 35K05; Secondary 35J25}
\date{}
\keywords{line segment energy,log-concavity comparison}
\begin{abstract}
In this paper, we compute the derivatives of the line segment energy for a symmetric tensor field and apply them to obtain slightly more general log-concavity estimates for positive solutions of heat equations and first eigenfunctions on bounded strictly convex domains.
\end{abstract}
\maketitle\markboth{Line segment energy}{ Chen \& Yu}
\section{Introduction}
In the celebrated proof of the fundamental conjecture by Andrews and Clutterbuck \cite{AC}, a sharp log-concavity estimate of the first eigenfunction plays an important role. In the proof, Andrews and Clutterbuck did not try to estimate the Hessian of the logarithmic of the first eigenfunction directly. Instead, they estimate the integration of the Hessian along line segments which they called modulus of expansion. This was observed by Ni \cite{Ni} and was called energy of line segments in \cite{Ni}. This will make things more complicated at first glance because we have doubled the number of spatial variables. However, by applying a clever trick to the new quantity, things become simpler (See \cite{An}). The method has been proved to be important in obtaining sharp estimates by its successes in gradient estimate, eigenvalue estimate etc.(See \cite{An}).

Let $\tau$ be a symmetric (0,2)-tensor on $\R^n$. Then the line segment energy between $x$ and $y$ is defined to be
\begin{equation}
E_\tau(x,y)=\int_0^{\|y-x\|}\tau(\theta(s,x,y))(N(x,y),N(x,y))ds
\end{equation}
where $N(x,y)=\frac{y-x}{\|y-x\|}$  and $\theta(s,x,y)=x+sN(x,y)$. When $f(x)$ is a function, we simply denote $E_{\nabla^2f}(x,y)$ as $E_f(x,y)$. When $f(x,t)$ is a function also depending on time, we simply denote $E_{\nabla^2f(\cdot,t)}(x,y)$ as $E_f(x,y,t)$. The line segment energy also appears in integral geometry where it is called ray transformation of the tensor field $\tau$(See \cite{Sh}).

In this paper, we compute the derivatives of the line segment energy for a symmetric tensor field and apply it to obtain slightly more general log-concavity estimates for positive solutions to heat equations and first eigenfunctions on bounded strictly convex domains. Our main result is as follows.
\begin{thm}\label{thm-main}
Let $\Omega$ be a bounded strictly convex domain in $\R^n$ with smooth boundary and diameter $D$, $q(x,t)\in C^\infty(\overline \Omega\times [0,T])$, $u(x,t)$ be a positive solution of the heat equation
$$\frac{\partial u}{\partial t}-\Delta u+qu=0$$
on $\Omega$ with Dirichlet boundary data. Let $\psi\in C^\infty([0,D/2]\times [0,T])$ with $\psi(0,t)=0$, $\psi_{ss}(0,t)=0$ and $\psi_s>0$. Here  $\psi_s$ means taking derivative with respect to $s$. Suppose that
\begin{equation}
E_q(x,y,t)\geq \varphi(r(x,y)/2,t)
\end{equation}
where $\varphi$ is a nonnegative function on $[0,D/2]\times[0,T]$,
\begin{equation}
E_{f}(x,y,0)\geq m_0\psi(r(x,y)/2,0)
\end{equation}
for some $m_0\geq 0$ where $f=-\log u$. Then
\begin{equation}
E_f(x,y,t)\geq m\psi(r(x,y)/2,t)
\end{equation}
for any $x,y\in \Omega$ and $t\in [0,T]$, where
\begin{equation}\label{eqn-m}
m=\min_{(s,t)\in [0,D/2]\times [0,T]}\left\{m_0,\frac{\psi_{ss}-\psi_t+\sqrt{(\psi_{ss}-\psi_t)^2+4\varphi\psi\psi_s}}{2\psi_s\psi}(s,t)\right\}.
\end{equation}
\end{thm}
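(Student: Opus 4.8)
\medskip
\noindent\textbf{Proof strategy.} The plan is to run a two-point maximum principle on $\overline\Omega\times\overline\Omega\times[0,T]$, in the spirit of Andrews and Clutterbuck \cite{AC}. The first ingredient is the elementary identity, obtained by the fundamental theorem of calculus along $\theta(\cdot,x,y)$,
$$
E_{\nabla^2 g}(x,y)=\langle\nabla g(y)-\nabla g(x),N(x,y)\rangle ;
$$
in particular $E_f(x,y,t)=\langle\nabla f(y,t)-\nabla f(x,t),N\rangle$ and $E_q(x,y,t)=\langle\nabla q(y,t)-\nabla q(x,t),N\rangle$, which trade the integral of a Hessian for a quantity built only from first derivatives at the two endpoints. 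Since $u_t-\Delta u+qu=0$, the function $f=-\log u$ satisfies $f_t=\Delta f-|\nabla f|^2+q$, hence $\partial_t\nabla f=\nabla\Delta f-2\nabla^2 f\,\nabla f+\nabla q$, and therefore, writing $f_i,f_{ij},f_{ijk}$ for the partial derivatives of $f$,
$$
\partial_t E_f=\underbrace{\langle\nabla\Delta f(y)-\nabla\Delta f(x),N\rangle}_{=:A}-2\underbrace{\langle\nabla^2 f(y)\nabla f(y)-\nabla^2 f(x)\nabla f(x),N\rangle}_{=:B}+E_q(x,y,t),
$$
with $E_q\ge\varphi(r/2,t)$ by hypothesis, where $r=\|y-x\|$.

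\medskip
Put $Z(x,y,t)=E_f(x,y,t)-m\,\psi(r/2,t)$; the aim is $Z\ge0$. On $\{t=0\}$ one has $Z\ge(m_0-m)\psi\ge0$, since $0\le m\le m_0$ and $\psi\ge0$ (because $\psi(0,\cdot)=0$, $\psi_s>0$). Since $u>0$ in $\Omega$ and $u|_{\partial\Omega}=0$, $f\to+\infty$ near $\partial\Omega$; strict convexity forces every chord meeting $\partial\Omega$ to do so transversally, so along such a chord $\nabla^2 f(\theta)(N,N)$ blows up like $\mathrm{dist}(\cdot,\partial\Omega)^{-2}$ and $E_f\equiv+\infty$ on $\partial(\overline\Omega\times\overline\Omega)$ off the diagonal; a short estimate using the concavity of $\mathrm{dist}(\cdot,\partial\Omega)$ together with strict convexity shows $Z>0$ near the corner $\partial\Omega\cap\{x=y\}$, while on the diagonal $Z=0$ and near an interior diagonal point $Z=O(r)\to0$. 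Hence the infimum of $Z$, if negative, would be attained (after a routine $\varepsilon$-perturbation, equivalently after replacing $m$ by $m'<m$ when $m>0$) at an interior point $(x_0,y_0,t_0)$ with $r_0:=\|y_0-x_0\|>0$, $t_0>0$, $E_f(x_0,y_0,t_0)=m\psi(r_0/2,t_0)$, $\nabla_{(x,y)}Z=0$, $\mathrm{Hess}_{(x,y)}Z\ge0$, $\partial_t Z\le0$.

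\medskip
The crux is the choice of test directions at $(x_0,y_0,t_0)$ (Andrews' trick). Rotate coordinates so that $N(x_0,y_0)=e_1$ and $y_0-x_0=r_0 e_1$; henceforth all $\psi$-derivatives are at $(r_0/2,t_0)$. The first-order equations give $f_{11}(x_0)=f_{11}(y_0)=\tfrac m2\psi_s$ and $f_{1a}(x_0)=f_{1a}(y_0)=-\tfrac1{r_0}\bigl(f_a(y_0)-f_a(x_0)\bigr)$ for $a\ge2$. Applying $\mathrm{Hess}_{(x,y)}Z\ge0$ to the ``stretching'' variation $(-e_1,e_1)$ (along which $N$ is frozen while $r$ varies linearly) yields $f_{111}(y_0)-f_{111}(x_0)\ge m\psi_{ss}$, and to the parallel translations $(e_a,e_a)$, $a\ge2$, yields $f_{1aa}(y_0)-f_{1aa}(x_0)\ge0$; summing these $n$ inequalities gives $A\ge m\psi_{ss}$. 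Substituting the first-order identities into $B$ gives $B=\tfrac m2\psi_s\bigl(f_1(y_0)-f_1(x_0)\bigr)-\tfrac1{r_0}\sum_{a\ge2}\bigl(f_a(y_0)-f_a(x_0)\bigr)^2$, and using $f_1(y_0)-f_1(x_0)=E_f=m\psi$ we obtain $B\le\tfrac{m^2}2\psi\psi_s$. Therefore $\partial_t E_f\ge m\psi_{ss}-m^2\psi\psi_s+\varphi$, while $\partial_t E_f=\partial_t Z+m\psi_t\le m\psi_t$, so
$$
\psi\psi_s\,m^2-(\psi_{ss}-\psi_t)\,m-\varphi\ \ge\ 0\qquad\text{at }(r_0/2,t_0).
$$
The left-hand side is an upward quadratic in $m$ with one root $\le0$ and the other equal to $\dfrac{\psi_{ss}-\psi_t+\sqrt{(\psi_{ss}-\psi_t)^2+4\varphi\psi\psi_s}}{2\psi_s\psi}$, while by definition $0\le m\le$ this value at $(r_0/2,t_0)$; hence the left-hand side is $\le0$ there, and the strictness coming from the perturbation produces the contradiction. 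Letting $\varepsilon\to0$ (resp. $m'\to m$) gives $Z\ge0$.

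\medskip
I expect the main obstacle to lie not in the contact-point computation -- the clean part, once the identity for $E_f$ and the evolution of $f$ are available -- but in two structural points: (i) setting up the maximum principle, i.e. ruling out a first violation on $\partial\Omega$ or on the diagonal, which genuinely uses strict convexity of $\Omega$ through the blow-up of $E_f$ along chords hitting $\partial\Omega$ and needs care at the corner $\partial\Omega\cap\{x=y\}$; and (ii) the borderline case where $m$ coincides with the critical value at $(r_0/2,t_0)$, together with the degenerate case $m=0$, for which the $\varepsilon$-perturbation and the fact that $m$ is a \emph{global} minimum over $[0,D/2]\times[0,T]$ are essential. A further point worth checking is that the variations $(-e_1,e_1)$ and $(e_a,e_a)$ really extract the full $\psi_{ss}$ rather than $\tfrac12\psi_{ss}$ -- this is precisely why one stretches along $N$ instead of moving a single endpoint.
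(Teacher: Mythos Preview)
Your argument is essentially correct and reaches the same quadratic inequality
\[
\psi\psi_s\,m^2-(\psi_{ss}-\psi_t)\,m-\varphi\ \ge\ 0
\]
at the contact point, but it follows a genuinely different route from the paper's.

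\textbf{Difference versus quotient.} You run the maximum principle on the \emph{difference} $Z=E_f-m\psi$ with $m$ fixed at the value in the statement, in the Andrews--Clutterbuck style. The paper instead works with the \emph{quotient} $Q=E_f/\psi$ (extended to the diagonal as $2\nabla^2f(X,X)/\psi_s(0,t)$), following Ni: it sets $m:=\inf Q$, shows the infimum is attained in the interior, and derives the quadratic directly for this $m$, whence $m\ge m_+(r_0/2,t_0)$. This buys the paper a clean conclusion without any $\varepsilon$-perturbation, since the inequality already goes the right way; on the other hand it forces a separate analysis of the diagonal case $x_0=y_0$ (your Case~(2) in the paper), which your difference approach avoids because $Z\equiv0$ there.

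\textbf{Endpoint identity versus line-segment calculus.} You immediately collapse $E_f$ to $\langle\nabla f(y)-\nabla f(x),N\rangle$ and do all computations with first derivatives at the endpoints. The paper keeps the integral form and uses the derivative formulas for $E_\tau$ developed in Section~2 (Theorems~\ref{thm-1-order-1}--\ref{thm-2-order-2}); the resulting identities \eqref{eqn-first-5}--\eqref{eqn-first-6} for $E_{f_i^2}$ play the role of your computation of $B$. Your route is shorter here; the paper's route showcases the general line-segment-energy machinery, which is part of its purpose.

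\textbf{Two small points.} First, your boundary blow-up rate is slightly overstated: in tangential directions $\nabla^2 f(N,N)$ only blows up like $\mathrm{dist}(\cdot,\partial\Omega)^{-1}$ (cf.\ Lemma~\ref{lm-bd}), not $\mathrm{dist}^{-2}$; this still gives $E_f\to+\infty$ (a logarithmic divergence), so your conclusion stands. Second, your handling of the borderline case relies on the perturbation $m\mapsto m'<m$; to make this airtight you should note that the contact-point analysis for $Z'=E_f-m'\psi$ yields $m'\ge m_+(r_0/2,t_0)\ge\min m_+\ge m>m'$, which is the desired contradiction (and the case $m=0$ reduces to Brascamp--Lieb log-concavity). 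The paper does invoke Brascamp--Lieb, but only to ensure $m>0$ in the quotient set-up.
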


By similar arguments, an elliptic version of the last theorem is also obtained (See Theorem \ref{thm-main-elliptic}). As corollaries of the log-concavity estimates, we obtain log-concavity comparisons for positive solutions of heat equations and first eigenfuncions. These log-concavity comparisons are presented in different forms in \cite{AC,Ni}. Our proofs of Theorem \ref{thm-main} apply the very interesting technique in the elliptic proof of fundamental gap theorem in \cite{Ni} where maximum principle is applied to a quotient quantity instead of a difference quantity. Because the quotient quantity has singularity on the diagonal, a process similar to blowing up is used.

The remaining parts of the paper are organized as follows. In Section 2, we compute the derivatives of line segment energy. In Section 3, we apply the computation in Section 2 to log-concavity estimates.
\section{Line segment energy in Euclidean spaces}
Let $\Omega$ be a convex domain in $\R^n$. Let $r(x,y)=\|x-y\|$, $N(x,y)=\frac{y-x}{\|y-x\|}$ and $\theta(s,x,y)=x+sN(x,y)$. It is clear that
$\theta(s,x,y)$ is the line segment joining $x$ to $y$.
Let $\tau$ be a symmetric $(0,2)$-tensor field on $\Omega$. Define the energy of $\tau$ between $x$ and $y$ as
$$E_{\tau}(x,y)=\int_0^{r(x,y)}\tau(\theta(s,x,y))(N(x,y),N(x,y))ds.$$

By direct computation, we can obtain the following first and second derivatives of $r(x,y),N(x,y)$ and $\theta(s,x,y)$.

\begin{lem}\label{lm-com}
Let $x_0$ and $y_0$ be two different points in $\R^n$, and $e_1,e_2,\cdots,e_n$ be an othonormal frame with $e_n=N(x_0,y_0)$. Let $E_i=(e_i,e_i)$ and $\tilde E_i=(e_i,-e_i)$ for $i=1,2,\cdots,n$. Then
\begin{enumerate}
\item $\nabla_{E_i}N(x_0,y_0)=0,\ \nabla_{E_i}r(x_0,y_0)=0,\ \nabla_{E_i}\theta(s,x_0,y_0)=e_i$ for $i=1,2,\cdots n$;
\item$\nabla_{\tilde E_i}N(x_0,y_0)=-\frac{2}{r(x_0,y_0)}e_i,\ \nabla_{\tilde E_i}r(x_0,y_0)=0,\ \nabla_{\tilde E_i}\theta(s,x_0,y_0)=\lf(1-\frac{2s}{r(x_0,y_0)}\ri)e_i$ for $i<n$;
\item $\nabla_{\tilde E_n}N(x_0,y_0)=0,\ \nabla_{\tilde E_n}r(x_0,y_0)=-2,\ \nabla_{\tilde E_n}\theta(s,x_0,y_0)=e_n$;
\item $\nabla_{E_i}\nabla_{E_i}N(x_0,y_0)=\nabla_{E_i}\nabla_{E_i}r(x_0,y_0)=\nabla_{E_i}\nabla_{E_i}\theta(s,x_0,y_0)=0$
for $i=1,2,\cdots,n$.
\item $\nabla_{\tilde E_n}\nabla_{\tilde E_n}N(x_0,y_0)=\nabla_{\tilde E_n}\nabla_{\tilde E_n}r(x_0,y_0)=\nabla_{\tilde E_n}\nabla_{\tilde E_n}\theta(s,x_0,y_0)=0$;
\end{enumerate}
\end{lem}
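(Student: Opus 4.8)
The plan is to reduce everything to explicit one-variable differentiation. Since the statement is about covariant derivatives in the flat space $\R^n\times\R^n$ (equipped with the product Euclidean metric, for which $\nabla$ is just the ordinary directional derivative), I would parametrize a curve through $(x_0,y_0)$ in the direction of the prescribed tangent vector and differentiate the formulas $r(x,y)=\|x-y\|$, $N(x,y)=(y-x)/\|y-x\|$, $\theta(s,x,y)=x+sN(x,y)$ term by term. Concretely, for $E_i=(e_i,e_i)$ the relevant curve is $(x_0+te_i,\,y_0+te_i)$, so $y-x$ is constant along it; for $\tilde E_i=(e_i,-e_i)$ with $i<n$ the curve is $(x_0+te_i,\,y_0-te_i)$, so $y-x=(y_0-x_0)-2te_i$; and for $\tilde E_n=(e_n,-e_n)$ the curve is $(x_0+te_n,\,y_0-te_n)$, so $y-x=(y_0-x_0)-2te_n=(r(x_0,y_0)-2t)e_n$ stays parallel to $e_n$. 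Because $e_n=N(x_0,y_0)$ is chosen aligned with $y_0-x_0$, these three cases are exactly the decoupled model cases: translation (norm and direction unchanged), transverse shear (direction rotates, norm unchanged to first order), and longitudinal stretch (norm changes, direction unchanged).

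First I would record the basic calculus facts: writing $v=v(t)=y-x$ along the chosen curve, one has $\frac{d}{dt}\|v\|=\langle v,\dot v\rangle/\|v\|$ and $\frac{d}{dt}(v/\|v\|)=\dot v/\|v\|-\langle v,\dot v\rangle v/\|v\|^3$. For case (1), $\dot v\equiv 0$, so all first derivatives of $r$ and $N$ vanish and $\nabla_{E_i}\theta=\frac{d}{dt}\big|_0(x_0+te_i+sN)=e_i$ since $N$ is constant. For case (2) with $i<n$, $\dot v=-2e_i$ and at $t=0$ we have $\langle v,\dot v\rangle=-2\langle (y_0-x_0),e_i\rangle=0$ because $y_0-x_0$ is a multiple of $e_n\perp e_i$; hence $\nabla_{\tilde E_i}r=0$, $\nabla_{\tilde E_i}N=-2e_i/r(x_0,y_0)$, and then $\nabla_{\tilde E_i}\theta=e_i+s\nabla_{\tilde E_i}N=(1-2s/r(x_0,y_0))e_i$. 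For case (3), $\dot v=-2e_n$ and $\langle v,\dot v\rangle=-2r(x_0,y_0)$ at $t=0$, giving $\nabla_{\tilde E_n}r=-2$; moreover along this curve $v(t)=(r(x_0,y_0)-2t)e_n$ keeps constant direction, so $\nabla_{\tilde E_n}N=0$ and $\nabla_{\tilde E_n}\theta=\frac{d}{dt}\big|_0\big(x_0+te_n+s\,\mathrm{sgn}(r(x_0,y_0)-2t)e_n\big)=e_n$.

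For the second-derivative statements (4) and (5), I would note that along each of these curves the relevant vector $v(t)=y-x$ is \emph{affine} in $t$ (respectively constant, $(y_0-x_0)-2te_i$, $(r(x_0,y_0)-2t)e_n$), and in cases (1) and (5) it moreover has constant direction, so $N$ restricted to the curve is constant, $r$ is affine, and $\theta$ is affine in $t$; hence all second $t$-derivatives vanish. One subtlety I would flag: $\nabla_{E_i}\nabla_{E_i}$ is the genuine covariant Hessian, so strictly one should differentiate the \emph{vector field} $\nabla_{E_i}N$ etc.\ rather than iterate derivatives of a single curve. But the curves $t\mapsto(x_0+te_i,y_0+te_i)$ and $t\mapsto(x_0+te_n,y_0-te_n)$ have parallel (constant) velocity fields along themselves, and in flat space the second covariant derivative along a geodesic with constant-speed parametrization is exactly $\frac{d^2}{dt^2}$ of the component functions, so the naive computation is legitimate for these diagonal second derivatives. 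This affineness observation is the only mildly delicate point; the rest is bookkeeping with the chain rule, and the orthogonality $\langle y_0-x_0,e_i\rangle=0$ for $i<n$ is what makes the cross terms vanish and keeps the formulas clean. \qec
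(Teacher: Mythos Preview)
Your proposal is correct and follows essentially the same approach as the paper: parametrize the straight-line curve in $\R^n\times\R^n$ through $(x_0,y_0)$ with velocity $E_i$ or $\tilde E_i$ and compute ordinary $t$-derivatives of $r$, $N$, $\theta$ along it. The paper in fact only writes out the single computation $\nabla_{E_i}\nabla_{E_i}N(x_0,y_0)=\frac{d^2}{dt^2}N(x_0+te_i,y_0+te_i)\big|_{t=0}=0$ and declares the rest ``similar,'' so your case-by-case treatment and the remark that $\frac{d^2}{dt^2}$ agrees with the iterated covariant derivative along a constant-velocity line in flat space are more detailed than what the paper provides, but not a different method.
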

\begin{proof}
We only compute $\nabla_{E_i}\nabla_{E_i}N(x_0,y_0)$ in (4), the others are similar.
\begin{equation}
\nabla_{E_i}\nabla_{E_i}N(x_0,y_0)=\frac{d^2}{dt^2}N(x_0+te_i,y_0+te_i)\bigg|_{t=0}=\frac{d^2}{dt^2}N(x_0,y_0)\bigg|_{t=0}=0.
\end{equation}
\end{proof}
We now come to compute the derivatives of $E_\tau$.
\begin{thm}\label{thm-1-order-1}Let notations be the same as before. Then,
$$\nabla_{E_i}E_{\tau}(x_0,y_0)=E_{\nabla_{e_i}\tau}(x_0,y_0)$$
for $i=1,2,\cdots,n$.
\end{thm}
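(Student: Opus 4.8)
The plan is to differentiate $E_\tau(x,y)$ directly in the direction $E_i = (e_i, e_i)$, which corresponds to translating both endpoints by $t e_i$. The crucial observation is that this translation is a rigid motion of $\R^n$, so it moves the entire line segment $\theta(s,x_0,y_0)$ rigidly. Concretely, $\nabla_{E_i} E_\tau(x_0,y_0) = \frac{d}{dt}\big|_{t=0} E_\tau(x_0 + t e_i, y_0 + t e_i)$, and I would compute this by differentiating under the integral sign.

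The key steps are as follows. First, note that by part (1) of Lemma \ref{lm-com}, $\nabla_{E_i} N(x_0,y_0) = 0$ and $\nabla_{E_i} r(x_0,y_0) = 0$, so neither the unit direction vector in the integrand nor the upper limit of integration varies to first order in this direction. Second, again by part (1), $\nabla_{E_i}\theta(s,x_0,y_0) = e_i$, so the only contribution comes from differentiating $\tau(\theta(s,x,y))$ with respect to its base point, which produces $(\nabla_{e_i}\tau)(\theta(s,x_0,y_0))$ evaluated on the pair $(N(x_0,y_0), N(x_0,y_0))$. Putting these together,
\begin{equation}
\nabla_{E_i} E_\tau(x_0,y_0) = \int_0^{r(x_0,y_0)} (\nabla_{e_i}\tau)(\theta(s,x_0,y_0))(N(x_0,y_0),N(x_0,y_0))\, ds,
\end{equation}
and the right-hand side is exactly $E_{\nabla_{e_i}\tau}(x_0,y_0)$ by definition, since $\nabla_{e_i}\tau$ is itself a symmetric $(0,2)$-tensor field on $\Omega$. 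The third, more pedestrian, step is to justify differentiation under the integral sign, which is immediate here because $\tau$ is smooth and the domain of integration $[0, r(x_0,y_0)]$ has endpoints depending smoothly (indeed, to first order trivially) on the parameter $t$; the standard Leibniz rule with a moving boundary applies, and the boundary terms vanish precisely because $\nabla_{E_i} r(x_0,y_0) = 0$.

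There is no real obstacle here: the statement is essentially a bookkeeping consequence of Lemma \ref{lm-com}(1), and the proof is a one-line application of the Leibniz integral rule once one recognizes that the direction $E_i$ acts by translation and hence fixes $N$ and $r$. If anything merits a moment's care, it is making sure that when we write $\nabla_{e_i}\tau$ we mean the covariant derivative of the tensor field $\tau$ at the point $\theta(s,x_0,y_0)$ in the constant direction $e_i$ — this is consistent because the frame $e_1,\dots,e_n$ is parallel in $\R^n$ — and that the contraction with $N(x_0,y_0)$ (also constant along the segment) commutes with this differentiation. I would present the argument in two or three lines, quoting Lemma \ref{lm-com}(1) for the three facts $\nabla_{E_i} N = 0$, $\nabla_{E_i} r = 0$, $\nabla_{E_i}\theta = e_i$, and then invoking the Leibniz rule.
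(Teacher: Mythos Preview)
Your proposal is correct and follows essentially the same approach as the paper: both invoke part (1) of Lemma~\ref{lm-com} to see that $\nabla_{E_i}r=0$, $\nabla_{E_i}N=0$, and $\nabla_{E_i}\theta=e_i$, and then differentiate under the integral sign so that the only surviving term is $\int_0^{r_0}(\nabla_{e_i}\tau)(\theta)(N,N)\,ds=E_{\nabla_{e_i}\tau}(x_0,y_0)$. Your additional remarks on the rigid-translation interpretation and the vanishing boundary term in the Leibniz rule are accurate but not needed beyond what the paper already records.
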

\begin{proof}By (1) in Lemma \ref{lm-com},
\begin{equation}
\begin{split}
&\nabla_{E_i}E_{\tau}(x_0,y_0)\\
=&\nabla_{E_i}\left(\int_0^{r(x,y)}\tau_{\alpha\beta}(\theta(s,x,y))N_\alpha N_\beta(x,y) ds\right)\bigg|_{x=x_0,y=y_0}\\
=&\int_0^{r(x_0,y_0)}\tau_{\alpha\beta;\gamma}(\theta(s,x_0,y_0))\nabla_{E_i}\theta_\gamma(s,x_0,y_0)N_\alpha N_\beta(x_0,y_0) ds\\
=&\int_0^{r(x_0,y_0)}\tau_{\alpha\beta;i}(\theta(s,x_0,y_0))N_\alpha N_\beta(x_0,y_0)ds\\
=&E_{\nabla_{e_i}\tau}(x_0,y_0).
\end{split}
\end{equation}
\end{proof}
\begin{thm}\label{thm-1-order-2}Let notations be the same as before. Then
\begin{equation}
\begin{split}
&\nabla_{\tilde E_i}E_\tau(x_0,y_0)\\
=&\left\{\begin{array}{ll}E_{\nabla_{e_i}\tau}(x_0,y_0)-\frac{2}{r(x_0,y_0)}\int_0^{r(x_0,y_0)}(s\tau_{nn;i}+2\tau_{in})(\theta(s,x_0,y_0))ds&i<n\\
E_{\nabla_{e_n}\tau}(x_0,y_0)-2\tau_{nn}(y_0)&i=n.
\end{array}\right.
\end{split}
\end{equation}
\end{thm}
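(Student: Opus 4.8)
The plan is to differentiate $E_\tau$ directly under the integral sign along the vector $\tilde E_i=(e_i,-e_i)$, exactly as in the proof of Theorem \ref{thm-1-order-1}, but now keeping track of the extra terms that arise because, unlike $\nabla_{E_i}$, the direction $\tilde E_i$ moves $r(x,y)$, $N(x,y)$ and $\theta(s,x,y)$ nontrivially.

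First I would write $E_\tau(x,y)=\int_0^{r(x,y)}\tau_{\alpha\beta}(\theta(s,x,y))N_\alpha(x,y)N_\beta(x,y)\,ds$ and apply the Leibniz rule. This produces three groups of terms, evaluated at $(x_0,y_0)$ and abbreviating $r=r(x_0,y_0)$: (a) a boundary term $\bigl(\nabla_{\tilde E_i}r\bigr)\,\tau_{\alpha\beta}(\theta(r,x_0,y_0))N_\alpha N_\beta$ coming from the upper limit, where one uses $\theta(r(x,y),x,y)=y$; (b) $\int_0^{r}\tau_{\alpha\beta;\gamma}(\theta(s,x_0,y_0))\bigl(\nabla_{\tilde E_i}\theta_\gamma(s,x_0,y_0)\bigr)N_\alpha N_\beta\,ds$ from the dependence of the integrand on the basepoint of the segment; and (c) $\int_0^{r}\tau_{\alpha\beta}(\theta(s,x_0,y_0))\,\nabla_{\tilde E_i}\bigl(N_\alpha N_\beta\bigr)\,ds$ from the dependence of $N$ on $(x,y)$. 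In (c), the symmetry $\tau_{\alpha\beta}=\tau_{\beta\alpha}$ collapses the two terms of the product rule into $2\tau_{\alpha\beta}(\theta)\,(\nabla_{\tilde E_i}N_\alpha)\,N_\beta$.

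Next I would substitute the values from Lemma \ref{lm-com}, using crucially that $N(x_0,y_0)=e_n$, so that $N_\alpha N_\beta$ picks out only the $(n,n)$ component. For $i<n$: (a) vanishes since $\nabla_{\tilde E_i}r=0$; (b) becomes $\int_0^{r}\bigl(1-\tfrac{2s}{r}\bigr)\tau_{nn;i}(\theta)\,ds$ because $\nabla_{\tilde E_i}\theta=\bigl(1-\tfrac{2s}{r}\bigr)e_i$; and (c) becomes $-\tfrac{4}{r}\int_0^{r}\tau_{in}(\theta)\,ds$ because $\nabla_{\tilde E_i}N=-\tfrac{2}{r}e_i$. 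For $i=n$: (a) equals $-2\tau_{nn}(y_0)$ since $\nabla_{\tilde E_n}r=-2$ and $\theta(r,x_0,y_0)=y_0$; (b) equals $\int_0^{r}\tau_{nn;n}(\theta)\,ds$ since $\nabla_{\tilde E_n}\theta=e_n$; and (c) vanishes since $\nabla_{\tilde E_n}N=0$.

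Finally I would repackage. In both cases $\int_0^{r}\tau_{nn;i}(\theta(s,x_0,y_0))\,ds=E_{\nabla_{e_i}\tau}(x_0,y_0)$ by the definition of the line segment energy applied to $\nabla_{e_i}\tau$. For $i<n$ one rewrites $\int_0^r\bigl(1-\tfrac{2s}{r}\bigr)\tau_{nn;i}(\theta)\,ds-\tfrac{4}{r}\int_0^r\tau_{in}(\theta)\,ds=E_{\nabla_{e_i}\tau}(x_0,y_0)-\tfrac{2}{r}\int_0^r\bigl(s\,\tau_{nn;i}+2\tau_{in}\bigr)(\theta)\,ds$, which is the claimed formula; for $i=n$ one directly reads off $E_{\nabla_{e_n}\tau}(x_0,y_0)-2\tau_{nn}(y_0)$. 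I do not expect a genuine obstacle here: the computation is routine once Lemma \ref{lm-com} is in hand, and the only points requiring care are the boundary term (remembering that $\theta$ reaches $y$ at $s=r$, and that $\nabla_{\tilde E_i}r$ vanishes for $i<n$) and the use of the symmetry of $\tau$ to merge the two $N$-derivative terms in (c).
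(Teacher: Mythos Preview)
Your proposal is correct and follows essentially the same approach as the paper: differentiate under the integral sign using the Leibniz rule, split into the boundary term, the $\nabla_{\tilde E_i}\theta$ term, and the $\nabla_{\tilde E_i}N$ term, then plug in parts (2) and (3) of Lemma \ref{lm-com} and regroup. The paper's write-up is terser (it combines your (b) and (c) directly into the final expression without displaying the intermediate $\int_0^r(1-\tfrac{2s}{r})\tau_{nn;i}\,ds-\tfrac{4}{r}\int_0^r\tau_{in}\,ds$), but the computation is identical.
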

\begin{proof}
By (2) of Lemma \ref{lm-com}, we have
\begin{equation}
\begin{split}
&\nabla_{\tilde E_i}E_\tau(x_0,y_0)\\
=&\nabla_{\tilde E_i}\lf(\int_0^{r(x,y)}\tau_{\alpha\beta}(\theta(s,x,y))N_\alpha N_\beta(x,y)ds \ri)\bigg|_{x=x_0,y=y_0}\\
=&\int_0^{r(x_0,y_0)}\tau_{\alpha\beta;\gamma}(\theta(s,x_0,y_0))\nabla_{\tilde E_i}\theta_\gamma(s,x_0,y_0)N_\alpha N_\beta(x_0,y_0)ds\\
&+2\int_0^{r(x_0,y_0)}\tau_{\alpha\beta}(\theta(s,x_0,y_0))\nabla_{\tilde E_i}N_\alpha N_\beta(x_0,y_0)ds\\
=&E_{\nabla_{e_i}\tau}(x_0,y_0)-\frac{2}{r(x_0,y_0)}\int_0^{r(x_0,y_0)}(s\tau_{nn;i}+2\tau_{in})(\theta(s,x_0,y_0))ds
\end{split}
\end{equation}
when $i<n$.

By (3) of Lemma \ref{lm-com}, we have
\begin{equation}
\begin{split}
&\nabla_{\tilde E_n}E_\tau(x_0,y_0)\\
=&\nabla_{\tilde E_n}\lf(\int_0^{r(x,y)}\tau_{\alpha\beta}(\theta(s,x,y))N_\alpha N_\beta(x,y)ds \ri)\bigg|_{x=x_0,y=y_0}\\
=&-2\tau_{nn}(y_0)+\int_0^{r(x_0,y_0)}\tau_{\alpha\beta;\gamma}(\theta(s,x_0,y_0))\nabla_{\tilde E_n}\theta_\gamma(s,x_0,y_0)N_\alpha N_\beta(x_0,y_0)ds\\
=&E_{\nabla_{e_n}\tau}(x_0,y_0)-2\tau_{nn}(y_0).
\end{split}
\end{equation}
\end{proof}
\begin{thm}\label{thm-2-oder-1}Let notations be the same as before. Then
$$\nabla_{E_i}\nabla_{E_i}E_{\tau}(x_0,y_0)=E_{\nabla_{e_i}\nabla_{e_i}\tau}(x_0,y_0)$$
for $i=1,2,\cdots,n.$
\end{thm}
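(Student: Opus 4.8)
The plan is to treat Theorem \ref{thm-2-oder-1} as the second-order counterpart of Theorem \ref{thm-1-order-1} and to prove it by a direct differentiation, exactly as in the proof of that theorem but carried one order further. Since $E_i=(e_i,e_i)$ is a constant (hence parallel) vector field on $\R^n\times\R^n$, one has
$$\nabla_{E_i}\nabla_{E_i}E_\tau(x_0,y_0)=\frac{d^2}{dt^2}\Big|_{t=0}E_\tau(x_0+te_i,\,y_0+te_i),$$
so it suffices to compute the right-hand side.

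The key simplification comes from Lemma \ref{lm-com}. Writing $x(t)=x_0+te_i$, $y(t)=y_0+te_i$, parts (1) and (4) give that $N(x(t),y(t))=N(x_0,y_0)=:N$ and $r(x(t),y(t))=r(x_0,y_0)=:r_0$ are independent of $t$, while $\theta(s,x(t),y(t))=\theta(s,x_0,y_0)+te_i$. Hence
$$E_\tau(x(t),y(t))=\int_0^{r_0}\tau_{\alpha\beta}\big(\theta(s,x_0,y_0)+te_i\big)\,N_\alpha N_\beta\,ds,$$
with $r_0$ and $N$ fixed. Differentiating twice under the integral sign — there is no boundary term since $r_0$ is constant, and no contribution from $N$ since $N$ is constant — yields
$$\frac{d^2}{dt^2}\Big|_{t=0}E_\tau(x(t),y(t))=\int_0^{r_0}(e_i)_\gamma(e_i)_\delta\,\partial_\gamma\partial_\delta\tau_{\alpha\beta}\big(\theta(s,x_0,y_0)\big)\,N_\alpha N_\beta\,ds.$$
To finish, one identifies the integrand: because $\R^n$ carries the flat connection and $\nabla_{e_i}e_i=0$, one has $(\nabla_{e_i}\nabla_{e_i}\tau)_{\alpha\beta}=(e_i)_\gamma(e_i)_\delta\partial_\gamma\partial_\delta\tau_{\alpha\beta}$, so the last integral is precisely $E_{\nabla_{e_i}\nabla_{e_i}\tau}(x_0,y_0)$, as claimed.

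The only point requiring care — and the place where a naive argument breaks — is that one should not simply write $\nabla_{E_i}\nabla_{E_i}E_\tau=\nabla_{E_i}\big(E_{\nabla_{e_i}\tau}\big)$ and reapply Theorem \ref{thm-1-order-1}, since that theorem is an identity of values at the single point where $e_n=N(x_0,y_0)$, not an identity of functions on $\R^n\times\R^n$. This is not a genuine obstacle: along the curve $(x_0+te_i,y_0+te_i)$ the vector $N$ remains equal to $N(x_0,y_0)=e_n$, so the frame hypothesis of Theorem \ref{thm-1-order-1} persists and one may legitimately differentiate the identity $\nabla_{E_i}E_\tau=E_{\nabla_{e_i}\tau}$ once more along the curve, now applied to the tensor $\nabla_{e_i}\tau$, reaching the same conclusion. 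Either route is a two-line computation, and I expect no real difficulty in this theorem.
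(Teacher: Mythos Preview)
Your proof is correct and follows essentially the same approach as the paper's: both arguments use parts (1) and (4) of Lemma \ref{lm-com} to eliminate the contributions from $r$, $N$, and the second derivative of $\theta$, then differentiate under the integral to reduce to $\int_0^{r_0}\tau_{\alpha\beta;ii}(\theta)N_\alpha N_\beta\,ds$. Your version is marginally cleaner in that you observe $r$, $N$ are \emph{globally} constant along the curve $t\mapsto(x_0+te_i,y_0+te_i)$ (since $y(t)-x(t)=y_0-x_0$), whereas the paper carries out the Leibniz rule term by term and kills each piece via the pointwise derivative identities; but this is a presentational difference, not a different route.
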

\begin{proof}By (1) and (4) of Lemma \ref{lm-com}, we have
\begin{equation}
\begin{split}
&\nabla_{E_i}\nabla_{E_i}E_{\tau}(x_0,y_0)\\
=&\nabla_{E_i}\nabla_{E_i}\left(\int_0^{r(x,y)}\tau_{\alpha\beta}(\theta)N_\alpha N_\beta ds\right)\bigg|_{x=x_0,y=y_0}\\
=&\nabla_{E_i}\left(\tau_{\alpha\beta}(y)N_\alpha N_\beta\nabla_{E_i}r+\int_0^{r(x,y)}\nabla_{E_i}(\tau_{\alpha\beta}(\theta)N_\alpha N_\beta) ds\right)\bigg|_{x=x_0,y=y_0}\\
=&\int_0^{r(x_0,y_0)}\nabla_{E_i}\nabla_{E_i}(\tau_{\alpha\beta}(\theta)N_\alpha N_\beta) ds\\
=&\int_0^{r(x_0,y_0)}\tau_{\alpha\beta;kl}(\theta)\nabla_{E_i}\theta_l\nabla_{E_i}\theta_kN_\alpha N_\beta ds\\
=&\int_0^{r(x_0,y_0)}\nabla_{e_i}\nabla_{e_i}\tau_{\alpha\beta}(\theta)N_\alpha N_\beta ds\\
=&E_{\nabla_{e_i}\nabla_{e_i}\tau}(x_0,y_0).
\end{split}
\end{equation}
\end{proof}

\begin{thm}\label{thm-2-order-2}Let notations be the same as before. Then,
$$\nabla_{\tilde E_n}\nabla_{\tilde E_n}E_{\tau}(x_0,y_0)=E_{\nabla_{e_n}\nabla_{e_n}\tau}(x_0,y_0).$$
\end{thm}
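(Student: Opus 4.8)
The plan is to reduce the identity to a one–variable calculus statement, exploiting the fact that $\tilde E_n=(e_n,-e_n)$ moves the two endpoints toward each other \emph{along the common direction} $e_n$, so that it merely shortens and reparametrizes the segment without rotating it. Since $\R^n\times\R^n$ is flat and $\tilde E_n$ extends to a constant (hence parallel) vector field, I would first observe, exactly as in the computation of Lemma~\ref{lm-com}(4),(5), that
\[
\nabla_{\tilde E_n}\nabla_{\tilde E_n}E_\tau(x_0,y_0)=\frac{d^2}{dt^2}\Big|_{t=0}E_\tau(x_0+te_n,\,y_0-te_n).
\]
Writing $r_0=r(x_0,y_0)$ and $\theta(u)=\theta(u,x_0,y_0)=x_0+ue_n$, the same elementary observation behind Lemma~\ref{lm-com}(3),(5) gives, for all $t$ with $2t<r_0$,
\[
N(x_0+te_n,y_0-te_n)=e_n,\qquad r(x_0+te_n,y_0-te_n)=r_0-2t,\qquad \theta(s,x_0+te_n,y_0-te_n)=\theta(s+t).
\]
Hence, with the shorthand $\tau_{nn}(\theta(u)):=\tau(\theta(u))(e_n,e_n)$ and the substitution $u=s+t$,
\[
G(t):=E_\tau(x_0+te_n,y_0-te_n)=\int_0^{r_0-2t}\tau_{nn}(\theta(s+t))\,ds=\int_t^{\,r_0-t}\tau_{nn}(\theta(u))\,du.
\]

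Next I would differentiate $G$ twice. Since $e_n$ is parallel along $\theta$, one has $\frac{d}{du}\tau_{nn}(\theta(u))=(\nabla_{e_n}\tau)_{nn}(\theta(u))$, so the fundamental theorem of calculus gives $G'(t)=-\tau_{nn}(\theta(r_0-t))-\tau_{nn}(\theta(t))$, and therefore
\[
G''(0)=(\nabla_{e_n}\tau)_{nn}(\theta(r_0))-(\nabla_{e_n}\tau)_{nn}(\theta(0))=(\nabla_{e_n}\tau)_{nn}(y_0)-(\nabla_{e_n}\tau)_{nn}(x_0),
\]
using $\theta(r_0)=x_0+r_0e_n=y_0$ and $\theta(0)=x_0$. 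On the other hand, the same parallelism of $e_n$ yields $(\nabla_{e_n}\nabla_{e_n}\tau)_{nn}(\theta(u))=\frac{d}{du}\big[(\nabla_{e_n}\tau)_{nn}(\theta(u))\big]$, so by the fundamental theorem of calculus once more,
\[
E_{\nabla_{e_n}\nabla_{e_n}\tau}(x_0,y_0)=\int_0^{r_0}(\nabla_{e_n}\nabla_{e_n}\tau)_{nn}(\theta(u))\,du=(\nabla_{e_n}\tau)_{nn}(y_0)-(\nabla_{e_n}\tau)_{nn}(x_0)=G''(0),
\]
which is precisely the assertion of the theorem.

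I do not expect a serious obstacle; the only point that needs care is the reparametrization identity $\theta(s,x_0+te_n,y_0-te_n)=\theta(s+t,x_0,y_0)$ together with the constancy of $N$ and the affine dependence of $r$ on $t$ — that is, that shifting both endpoints inward along $e_n$ keeps the data on the same oriented ray, which holds exactly while $2t<r_0$. I would deliberately avoid the alternative route of differentiating the first–order formula of Theorem~\ref{thm-1-order-2} once more: that formula is written in the frame adapted to the fixed pair $(x_0,y_0)$, and extending it to a neighborhood forces one to track the variation of $N$, hence of the ``$nn$'' contraction and of the boundary term $\tau_{nn}(y_0)$, a bookkeeping nuisance that the reparametrization argument sidesteps entirely.
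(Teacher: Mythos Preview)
Your proof is correct and rests on the same observation as the paper's: moving along $\tilde E_n$ keeps $N$ constant and merely slides the parametrization of the segment, so the computation is purely one--variable. The only difference is organizational---where the paper differentiates the integral directly via the Leibniz rule and the identities of Lemma~\ref{lm-com}(3),(5), you first change variables $u=s+t$ so that only the limits depend on $t$ and then apply the fundamental theorem of calculus; this is a slightly cleaner bookkeeping of the same calculation.
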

\begin{proof} By (3) and (5) of Lemma \ref{lm-com},
\begin{equation}
\begin{split}
&\nabla_{\tilde E_n}\nabla_{\tilde E_n}E_{\tau}(x_0,y_0)\\
=&\nabla_{\tilde E_n}\nabla_{\tilde E_n}\left(\int_0^{r(x,y)}\tau_{\alpha\beta}(\theta)N_\alpha N_\beta ds\right)\bigg|_{x=x_0,y=y_0}\\
=&\nabla_{\tilde E_n}\left(\tau_{\alpha\beta}(y)N_\alpha N_\beta\nabla_{\tilde E_n}r+\int_0^{r(x,y)}\nabla_{\tilde E_n}(\tau_{\alpha\beta}(\theta)N_\alpha N_\beta) ds\right)\bigg|_{x=x_0,y=y_0}\\
=&2\tau_{nn;n}(y_0)+\nabla_{\tilde E_n}\left(\int_0^{r(x,y)}(\tau_{\alpha\beta;\gamma}(\theta)\nabla_{\tilde E_n}\theta_\gamma N_\alpha N_\beta) ds\right)\bigg|_{x=x_0,y=y_0}\\
=&\int_0^{r(x_0,y_0)}\tau_{\alpha\beta;\gamma\delta}(\theta)\nabla_{\tilde E_n}\theta_\delta\nabla_{\tilde E_n}\theta_\gamma N_\alpha N_\beta ds\\
=&\int_0^{r(x_0,y_0)}\nabla_{e_n}\nabla_{e_n}\tau_{\alpha\beta}(\theta)N_\alpha N_\beta ds\\
=&E_{\nabla_{e_n}\nabla_{e_n}\tau}(x_0,y_0).
\end{split}
\end{equation}
\end{proof}

\section{Applications to log-concavity comparison}
In this section, we use the line segment energy and an interesting technique in \cite{Ni} to derive slightly more general log-concavity estimates. We first need some boundary behaviors for the line segment energy of the logarithmic of a defining function for a bounded strictly convex domain. Before doing this,
we need the following boundary behavior for the Hessian of the logarithmic of a defining function of a bounded strictly convex domain.
\begin{lem}\label{lm-bd}
Let $\Omega$ be a bounded strictly convex domain in $\R^n$ with smooth boundary and $\phi\in C^2(\overline \Omega)$ such that $\phi>0$ in $\Omega$, $\phi\big|_{\p\Omega}=0$ and $\frac{\p\phi}{\p \nu}\big|_{\p\Omega}<0$ where $\nu$ is the unit outward normal of $\Omega$. Let $f=-\log\phi$. Then, there are two positive constants $\delta_0$ and $c_0$, such that
\begin{equation}
\nabla^2f(x)(X,X)\geq \frac{c_0\|X\|^2}{\phi(x)}
\end{equation}
 for any $x\in \Omega$ with $\phi(x)<\delta_0$ and $X\in \R^n$.
\end{lem}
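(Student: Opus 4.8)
The plan is to work near a fixed boundary point $p \in \p\Omega$ and show that in a sufficiently small neighborhood $U_p$ the Hessian of $f = -\log\phi$ dominates $c_p \phi^{-1} \|X\|^2$ for some $c_p > 0$; since $\p\Omega$ is compact, one then takes $c_0 = \inf_p c_p > 0$ and $\delta_0$ small enough that $\{\phi < \delta_0\}$ is covered by these neighborhoods. First I would compute
\begin{equation}
\nabla^2 f = -\frac{\nabla^2\phi}{\phi} + \frac{d\phi \otimes d\phi}{\phi^2},
\end{equation}
so for a unit vector $X$ we have $\nabla^2 f(X,X) = \phi^{-2}\bigl( (d\phi(X))^2 - \phi\, \nabla^2\phi(X,X) \bigr)$. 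The term $\phi\,\nabla^2\phi(X,X)$ is $O(\phi)$ since $\nabla^2\phi$ is bounded on $\cl\Omega$, so the whole sign and size of the bracket is governed by $(d\phi(X))^2$ versus a controlled error.

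The key geometric input is strict convexity: near $p$, after rotating coordinates so that $\nu(p) = e_n$, the boundary is a graph $x_n = h(x')$ with $h(0)=0$, $\nabla h(0)=0$, and $\nabla^2 h(0) > 0$ (positive definite, by strict convexity). I would decompose $X = X^\top + X^\perp$ relative to the level sets of $\phi$. On the normal part, $d\phi(X^\perp)$ is bounded below in size because $\p\phi/\p\nu \ne 0$ on $\p\Omega$ (hence $|\nabla\phi| \geq c > 0$ on a neighborhood of $\p\Omega$), giving $(d\phi(X))^2 \gtrsim \|X^\perp\|^2$. On the tangential part, $d\phi(X^\top)$ may vanish to first order, so here I must instead extract positivity from $-\phi\,\nabla^2\phi(X^\top, X^\top)$: I claim that for $x$ close to $\p\Omega$, $\nabla^2\phi$ restricted to (near-)tangential directions is negative definite, uniformly. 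This is exactly the second fundamental form computation: along $\p\Omega$, for $X$ tangent, $\nabla^2\phi(X,X) = \frac{\p\phi}{\p\nu}\,\mathrm{II}(X,X)$, and since $\p\phi/\p\nu < 0$ while $\mathrm{II} > 0$ by strict convexity, this is strictly negative, hence $-\phi\,\nabla^2\phi(X^\top,X^\top) \gtrsim \phi\,\|X^\top\|^2$. Combining: for $\|X\|=1$, write $\|X^\top\|^2 + \|X^\perp\|^2 = 1$; the bracket is $\gtrsim \|X^\perp\|^2 + \phi\,\|X^\top\|^2 - C\phi$. If $\|X^\perp\|^2 \geq \frac12$ we use the first term; if $\|X^\perp\|^2 < \frac12$ then $\|X^\top\|^2 > \frac12$ and the tangential convexity term is $\gtrsim \phi$, which beats the $C\phi$ error once we are careful — but here the bracket is only of order $\phi$, not of order $1$, so $\nabla^2 f(X,X) = \phi^{-2}\cdot O(\phi) = O(\phi^{-1})$, matching the claimed bound. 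A cleaner way to organize this is: $\nabla^2 f(X,X) \geq \phi^{-2}(d\phi(X))^2 - \phi^{-1}\nabla^2\phi(X,X)$, bound the first term below by $0$ always, and on the set where $d\phi(X)$ is small (comparable to the tangential regime) bound the second term below by $c\phi^{-1}$ using negative-definiteness of $\nabla^2\phi$ there; in the complementary regime the first term already gives $\gtrsim \phi^{-2} \gg \phi^{-1}$.

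The main obstacle is making the tangential/normal split uniform and quantitative near the whole boundary — i.e. proving that there exist $\delta_0, c_0$ such that for every $x$ with $\phi(x) < \delta_0$, \emph{either} $|d\phi(x)(X)|^2 \geq c_0\,\phi(x)$ \emph{or} $\nabla^2\phi(x)(X,X) \leq -c_0$, for all unit $X$. This is a compactness argument: the function $(x,X) \mapsto \max\{ |d\phi(x)(X)|^2/\phi(x),\ -\nabla^2\phi(x)(X,X) \}$ extends continuously (with the convention that on $\p\Omega$ the first slot is $+\infty$ when $d\phi(X)\ne 0$) and is positive on the compact set $\p\Omega \times S^{n-1}$ by the second-fundamental-form computation plus $\p\phi/\p\nu<0$; hence it is bounded below by some $c_0>0$ on a neighborhood, which gives $\delta_0$. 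Once that dichotomy is in hand, the estimate $\nabla^2 f(X,X) \geq c_0'\,\phi^{-1}\|X\|^2$ follows by the elementary case analysis above, rescaling $c_0$ as needed. I would present the second fundamental form identity $\nabla^2\phi|_{T\p\Omega} = (\p\phi/\p\nu)\,\mathrm{II}$ with a one-line justification (differentiate $\phi \equiv 0$ along the boundary twice) and leave the compactness packaging brief.
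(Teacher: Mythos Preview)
Your proposal is correct and follows essentially the same approach as the paper: the same Hessian formula $\nabla^2 f=-\phi^{-1}\nabla^2\phi+\phi^{-2}\,d\phi\otimes d\phi$, the same orthogonal decomposition of $X$ into parts tangent and normal to the level sets of $\phi$, the same use of strict convexity via the second fundamental form identity to get $-\nabla^2\phi(X^\top,X^\top)\geq c\|X^\top\|^2$ near $\p\Omega$, and the same continuity/compactness to make the constants uniform. The paper's assembly of the pieces is a bit cleaner than your case analysis or dichotomy: it simply expands $\nabla^2 f(X_1+X_2,X_1+X_2)$, absorbs the cross term $-2\phi^{-1}\nabla^2\phi(X_1,X_2)$ into the two diagonal terms by Young's inequality, and then shrinks $\delta_0$ so that the $c_1\phi^{-2}\|X_2\|^2$ term swallows the $O(\phi^{-1})\|X_2\|^2$ errors---this sidesteps the slightly delicate compactness packaging of your dichotomy (where, incidentally, the max function does not extend continuously to $\p\Omega\times S^{n-1}$, though a proof-by-contradiction version still works).
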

\begin{proof}
For $\delta\geq 0$, let $\Omega_\delta=\{x\in\Omega\ |\ \phi(x)>\delta\}.$
Since $\frac{\p\phi}{\p \nu}|_{\p\Omega}<0$, we have $\nabla\phi|_{\p\Omega}\neq 0$. By coninuity, there is $\delta_1>0$ such that $\nabla\phi\neq 0$ on $\Omega\setminus\Omega_{\delta_1}$. It is clear that, when $\delta<\delta_1$, the unit outward normal of $\Omega_\delta$ is
$$\nu=-\frac{\nabla\phi}{\|\nabla\phi\|}\bigg|_{\p\Omega_\delta}.$$
The second fundamental form of $\p\Omega_\delta$ with respect to $\nu$ is
$$II(X,Y)=-\frac{\nabla^2\phi(X,Y)}{\|\nabla\phi\|}$$
for any $X,Y$ with $\vv<X,\nabla\phi>=\vv<Y,\nabla\phi>=0$. Since $\Omega$ is strictly convex, $II(X,X)>0$ on $\p\Omega$ for any $X$ with $\vv<X,\nabla\phi>=0$. By continuity, there are positive numbers $\delta_2<\delta_1$ and $c_1$  such that
\begin{equation}
-\nabla^2\phi(x)(X,X)\geq c_1\|X\|^2,\ \mbox{and}\ \|\nabla\phi\|^2(x)\geq c_1
\end{equation}
for any $X$ with $\vv<X,\nabla\phi(x)>=0$ and $x$ with $\phi(x)<\delta_2$.

For any $x\in \Omega\setminus \Omega_{\delta_2}$ and vector $X$, let
$$X=X_1+X_2$$
is the orthogonal decomposition of $X$ with $\vv<X_1,\nabla\phi(x)>=0$ and $X_2$ parallel to $\nabla\phi(x)$. Then
\begin{equation}\label{eqn-bnd-Hess-f}
\begin{split}
&\nabla^2f(X,X)\\
=&-\frac{\nabla^2\phi(x)(X,X)}{\phi(x)}+\frac{\vv<\nabla\phi(x),X>^2}{\phi(x)^{2}}\\
=&-\frac{\nabla^2\phi(x)(X_1,X_1)}{\phi(x)}-\frac{2\nabla^2\phi(x)(X_1,X_2)}{\phi(x)}-\frac{\nabla^2\phi(x)(X_2,X_2)}{\phi(x)}+\frac{\vv<\nabla\phi(x),X_2>^2}{\phi(x)^{2}}\\
\geq&\frac{c_1\|X_1\|^2}{\phi(x)}-\frac{2A\|X_1\|\|X_2\|}{\phi(x)}-\frac{A\|X_2\|^2}{\phi(x)}+\frac{c_1\|X_2\|^2}{\phi(x)^2}\\
\geq&\frac{c_1\|X_1\|^2}{2\phi(x)}+\left(\frac{c_1}{\phi(x)^2}-\frac{A+2A^2/c_1}{\phi(x)}\right)\|X_2\|^2\\
\geq&\frac{c_1}{2\phi(x)}\|X_1\|^2+\frac{c_1}{2\phi(x)^2}\|X_2\|^2\\
\geq&\frac{c_1\|X\|^2}{2\phi(x)}
\end{split}
\end{equation}
when $\phi(x)<\delta_3$ be small enough, where $A=\|\phi\|_{C^2(\overline\Omega)}$.
\end{proof}
We are now ready to derive the boundary behavior of the line segment energy for a defining function of a bounded strictly convex domain.
\begin{thm}\label{thm-bd}
Let $\Omega$ be a bounded strictly convex domain in $\R^n$ with smooth boundary and $\phi\in C^2(\overline \Omega)$ such that $\phi>0$ in $\Omega$, $\phi\big|_{\p\Omega}=0$ and $\frac{\p\phi}{\p \nu}\big|_{\p\Omega}<0$ where $\nu$ is the unit outward normal of $\Omega$. Let $f=-\log\phi$, and $\{x_k\}$ and $\{y_k\}$ be two sequences of points in $\Omega$ with $x_k\to x_0$ and $y_k\to y_0$ as $k\to\infty$. Then
\begin{enumerate}
\item when $x_0\in\Omega$ and $y_0\in\partial\Omega$, $\lim_{k\to\infty} E_f(x_k,y_k)=+\infty$;
\item when $x_0\in \partial\Omega$ and $y_0\in\Omega$, $\lim_{k\to\infty} E_f(x_k,y_k)=+\infty$;
\item when $x_0,y_0\in\partial\Omega$ and $x_0\neq y_0$, $\lim_{k\to\infty} E_f(x_k,y_k)=+\infty$;
\item when $x_0=y_0\in\partial\Omega$, $\liminf_{k\to\infty} E_f(x_k,y_k)\geq0$;
\item when $x_0=y_0\in\partial\Omega$ and $x_k\neq y_k$ for $k$  large enough, $$\lim_{k\to\infty} \frac{E_f(x_k,y_k)}{r(x_k,y_k)}=+\infty.$$
\end{enumerate}
\end{thm}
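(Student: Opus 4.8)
The key input is Lemma \ref{lm-bd}, which gives $\nabla^2 f(x)(X,X)\geq c_0\|X\|^2/\phi(x)$ once $\phi(x)<\delta_0$; hence the integrand $\tau(\theta(s))(N,N)$ in $E_f(x_k,y_k)$, with $\tau=\nabla^2 f$, is bounded below by $c_0/\phi(\theta(s))$ on the portion of the segment lying inside the thin collar $\{\phi<\delta_0\}$. Since $\Omega$ is strictly convex with smooth boundary, $\phi$ behaves like $\mathrm{dist}(\cdot,\partial\Omega)$ near the boundary, so I would fix a point $z_0\in\partial\Omega$ and, using strict convexity, compare $\phi(\theta(s,x_k,y_k))$ from above by a quantity of the form $c_2(\alpha_k + |s-s_k^*|\,\mu_k + \text{(transverse terms)})$ where $s_k^*$ is the parameter minimizing the distance of the segment to $z_0$; the crucial geometric fact is that a chord of a uniformly convex domain that approaches a boundary point either has an endpoint near that point or crosses the collar transversally, and in all cases the integral $\int c_0/\phi(\theta(s))\,ds$ over the relevant subinterval diverges or stays bounded below according to the case.

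Concretely I would argue case by case. For (1)–(3), since at least one endpoint converges to the boundary (or both, to distinct boundary points), the segment spends a full sub-length $\geq\epsilon>0$ in the collar near that endpoint while $\phi$ along it decays at most linearly in $s$ down to a value $\to 0$; thus $\int_0^{r_k} c_0/\phi(\theta(s))\,ds \geq c_0\int_0^{\epsilon} ds/(C s + \phi(x_k)) \to +\infty$ as $\phi(x_k)\to 0$ (and for the interior-endpoint piece the integrand is merely bounded, which is harmless). I also need the part of the segment away from the collar to contribute a bounded-below amount; this follows because $f=-\log\phi$ is smooth on the compact set $\{\phi\geq\delta_0\}$ restricted to $\bar\Omega$, so $\nabla^2 f$ is bounded there and that portion of the integral is uniformly bounded. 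For (4), when $x_0=y_0=z_0\in\partial\Omega$, the whole segment lies in the collar for large $k$; splitting $E_f = \int (\text{collar part})$ and using Lemma \ref{lm-bd} directly gives $E_f(x_k,y_k)\geq -\!\!\int_0^{r_k}(\text{bounded above quantity if }\phi\text{ not small})\,ds + \int(\text{nonneg part})$; more carefully, once $k$ is large the entire segment has $\phi<\delta_0$, so the integrand is nonnegative, giving $E_f(x_k,y_k)\geq 0$ and hence $\liminf\geq 0$. For (5), I divide the lower bound $\int_0^{r_k} c_0/\phi(\theta(s))\,ds$ by $r_k=r(x_k,y_k)\to 0$; using strict convexity to bound $\phi(\theta(s,x_k,y_k))\leq C(r_k + \mathrm{dist}(z_0\text{-side}))$ along the chord — in fact $\phi$ on a chord of a uniformly convex body of length $r_k$ with both endpoints $o(1)$ from $\partial\Omega$ is $O(r_k)$ pointwise after accounting for the convex "bulge" — one gets $E_f/r_k \geq c_0 r_k \cdot (1/(C r_k)) \cdot (\text{const}) $ which must be pushed to show it actually diverges; the right estimate is $\phi(\theta(s))\leq C\,r_k\,\sigma(1-\sigma) + o(r_k^2/\text{something})$ type with $\sigma=s/r_k$, whence $\int_0^{r_k} ds/\phi(\theta(s)) \gtrsim (1/C)\int_0^1 d\sigma/(\sigma(1-\sigma)+\text{error}/r_k)$, and as $k\to\infty$ the error/$r_k$ term $\to 0$ forcing the integral $\to+\infty$, so $E_f/r_k\to+\infty$.

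The main obstacle is item (5) together with the uniform collar estimate underlying all cases: I must control $\phi$ along a chord precisely enough — not just "$\phi\leq C\,\mathrm{dist}$" but the quadratic-in-arclength lower decay of $\phi$ coming from uniform convexity (the chord dips a definite distance $\sim r_k^2$ into the interior at its midpoint, yet near the endpoints $\phi$ is only linearly small) — and show the resulting integral of $1/\phi$ genuinely diverges after dividing by $r_k$. This requires writing $\phi$ in Fermi coordinates near $z_0\in\partial\Omega$, $\phi(p)\approx d(p) $ with $d$ the boundary distance, and estimating $d(\theta(s,x_k,y_k))$ via the second fundamental form of $\partial\Omega$; the strict convexity hypothesis (lower bound $c_1$ on $II$ from the proof of Lemma \ref{lm-bd}) is exactly what makes the chord stay a controlled distance from $\partial\Omega$ and what makes the $1/\phi$ integral blow up at the required rate. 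The remaining cases (1)–(4) are then comparatively routine once this collar analysis is in place.
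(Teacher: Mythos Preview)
Your treatment of cases (1)--(4) is essentially correct and matches the paper's argument: split the segment into the interior piece (where $\nabla^2 f$ is bounded) and the collar piece (where Lemma~\ref{lm-bd} gives the lower bound $c_0/\phi$), and for (4) simply observe that for large $k$ the whole segment lies in the collar so the integrand is nonnegative. The paper carries out the collar integral in (1)--(3) by the change of variable $\sigma=\phi(\theta(s))$ rather than your linear upper bound $\phi(\theta(s))\leq Cs+\phi(\text{endpoint})$, but the two are equivalent in effect.

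Case (5), however, is where you go astray. Your proposed estimate $\phi(\theta(s))\leq C\,r_k\,\sigma(1-\sigma)+\text{error}$ with $\text{error}/r_k\to 0$ is not valid: nothing forces $\phi(x_k),\phi(y_k)$ to be $O(r_k)$. For instance, take $x_k$ on the inward normal at $x_0$ with $\phi(x_k)\sim 1/k$ and $y_k$ with $r_k=\|x_k-y_k\|=1/k^2$; then $\phi$ along the whole segment is $\sim 1/k\gg r_k$, so your quadratic bound fails and your ``$\text{error}/r_k\to 0$'' is false (it blows up). The convex-bulge picture you invoke applies to chords with endpoints \emph{on} $\partial\Omega$, not to the general situation here.

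The paper's argument for (5) is in fact far simpler than what you attempt: once the whole segment lies in the collar, Lemma~\ref{lm-bd} gives
\[
\frac{E_f(x_k,y_k)}{r(x_k,y_k)}
=\frac{1}{r_k}\int_0^{r_k}\nabla^2 f(\theta(s))(N,N)\,ds
\;\geq\;\frac{1}{r_k}\int_0^{r_k}\frac{c_0}{\phi(\theta(s))}\,ds
\;\geq\;\frac{c_0}{\max_{x\in\overline{x_ky_k}}\phi(x)}.
\]
Since $x_k,y_k\to x_0\in\partial\Omega$, the entire segment $\overline{x_ky_k}$ converges to $x_0$, so $\max_{\overline{x_ky_k}}\phi\to\phi(x_0)=0$ and the right-hand side tends to $+\infty$. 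No convexity-based estimate on $\phi$ along the chord is needed beyond continuity.
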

\begin{proof}
\begin{enumerate}
\item Without loss of generality, we can suppose that $x_0\in \Omega_{\delta_0}$ where $\delta_0$ is the same as in Lemma \ref{lm-bd}. Otherwise, we can shrink $\delta_0$ so that $\Omega_{\delta_0}$ contains $x_0$. Hence for $k$ large enough, we have $x_k\in\Omega_{\delta_0}$ and $y_k\not\in\ol\Omega_{\delta_0}$. Let
$$s_k=\sup\{s\in(0,r(x_k,y_k))\ |\ \theta(s,x_k,y_k)\in\Omega_{\delta_0}\}.$$
Then, by convexity of $\Omega_{\delta_0}$, we know that
$$\theta(s,x_k,y_k)\in \ol\Omega_{\delta_0}\ \mbox{for all}\ s\in [0,s_k]$$
and
$$\theta(s,x_k,y_k)\in \Omega\setminus\Omega_{\delta_0}\ \mbox{for all}\ s\in [s_k,r(x_k,y_k)].$$
Hence, by Lemma \ref{lm-bd},
\begin{equation}
\begin{split}
&E_{f}(x_k,y_k)\\
=&\int_0^{r(x_k,y_k)}\nabla^2f(\theta(s,x_k,y_k))(N(x_k,y_k),N(x_k,y_k))ds\\
=&\int_0^{s_k}\nabla^2f(\theta(s,x_k,y_k))(N(x_k,y_k),N(x_k,y_k))ds\\
&+\int_{s_k}^{r(x_k,y_k)}\nabla^2f(\theta(s,x_k,y_k))(N(x_k,y_k),N(x_k,y_k))ds\\
\geq& -\|f\|_{C^2(\ol\Omega_{\delta_0})}s_k+\int_{s_k}^{r(x_k,y_k)}\frac{c_0}{\phi(\theta(s,x_k,y_k))}ds\\
=&-\|f\|_{C^2(\ol\Omega_{\delta_0})}D+c_0\int_{\phi(y_k)}^{\delta_0}\frac{1}{\sigma|\vv<\nabla\phi,N(x_k,y_k)>|}d\sigma\\
\geq&-\|f\|_{C^2(\ol\Omega_{\delta_0})}D+\frac{c_0}{\|\phi\|_{C^1(\bar\Omega)}}\int_{\phi(y_k)}^{\delta_0}\frac{1}{\sigma}d\sigma\\
=&-\|f\|_{C^2(\ol\Omega_{\delta_0})}D+\frac{c_0}{\|\phi\|_{C^1(\ol\Omega)}}(\ln\delta_0-\ln\phi(y_k)).\\
\end{split}
\end{equation}
Here $D$ is the diameter of $\Omega$. Since $\phi(y_k)\to 0$ as $k\to\infty$, we get the conclusion.

\item By the symmetry of $E_f(x,y)$, it is clear.
 \item Without loss of generality, we can assume that the intersection of the line segment $\overline{x_0y_0}$ and $\Omega_{\delta_0}$ is nonempty, otherwise we can shrink $\delta_0$ to make the intersection nonempty. For $k$ large enough such that $x_k,y_k\not\in\overline\Omega_{\delta_0}$, let
$$\sigma_k=\inf\{s\in(0,r(x_k,y_k))\ |\ \theta(s,x_k,y_k)\in\Omega_{\delta_0}\}$$
and $s_k$ the same as before. By the convexity of $\Omega_{\delta_0}$, it is clear that
$$\theta(s,x_k,y_k)\in \ol\Omega_{\delta_0}\ \mbox{for all}\ s\in [\sigma_k,s_k]$$
and
$$\theta(s,x_k,y_k)\in \Omega\setminus \Omega_{\delta_0}\ \mbox{for all}\ s\in [0,\sigma_k]\cup[s_k,r(x_k,y_k)].$$
Then, a similar argument as in (1) gives us
\begin{equation}
E_f(x_k,y_k)\geq -\|f\|_{C^2(\overline{\Omega_{\delta_0}})}D+\frac{c_0}{\|\phi\|_{C^1(\overline\Omega)}}(2\ln\delta_0-\ln\phi(x_k)-\ln\phi(y_k)).
\end{equation}
So the conclusion follows.
 \item When $k$ is large enough, we know that the line segment $\overline{x_ky_k}$ will be contained in $\Omega\setminus\Omega_{\delta_0}$. By Lemma \ref{lm-bd}, we know that the Hessian of $f$ is positive on the line segment $\overline{x_ky_k}$ when $k$ is large enough. Hence, the conclusion follows.
 \item When $k$ is large enough, the line segment $\overline{x_ky_k}$ will be contained in $\Omega\setminus\Omega_{\delta_0}$. Hence, by Lemma \ref{lm-bd}
     \begin{equation}
      \frac{E_{f}(x_k,y_k)}{r(x_k,y_k)}\geq \frac{c_0}{\max_{x\in \overline{x_ky_k}}\phi(x)}
     \end{equation}
     tends to $+\infty$ as $k\to\infty$.
 \end{enumerate}
\end{proof}

We now come to prove Theorem \ref{thm-main}.

\begin{proof}[Proof of Theorem \ref{thm-main}]
Let
$$S=\{(x,y,X,t)\in \Omega\times\Omega\times\mathbb{S}^{n-1}\times[0,T]\ |\ X=N(x,y) \mbox{ if } x\neq y\}.$$
 Define the function $Q(x,y,X,t)$ on $S$ as
 \begin{equation}
Q(x,y,X,t)=\left\{\begin{array}{ll}\frac{E_f(x,y,t)}{\psi(r(x,y)/2,t)}&x\neq y\\
\frac{2\nabla^2f(x,t)(X,X)}{\psi_s(0,t)}&x=y.
\end{array}\right.
\end{equation}
Let $m=\inf_{S} Q$. It is clear that $m<+\infty$. Let $\{(x_k,y_k,X_k,t_k)\}$ be a sequence of points in $S$ such that
$$m=\lim_{k\to\infty}Q(x_k,y_k,X_k,t_k).$$
By passing to a subsequence if necessary, we can assume that $$(x_k,y_k,X_k,t_k)\to(x_0,y_0,X_0,t_0)$$
as $k\to\infty$. By Lemma \ref{lm-bd} and Theorem \ref{thm-bd}, we know that
$$Q(x_k,y_k,X_k,t_k)\to+\infty$$
as $k\to\infty$, when $x_0\in\p\Omega$ or $y_0\in \p\Omega$. This is impossible. Hence $(x_0,y_0)\in \Omega\times\Omega$. By continuity of $Q$, we know that $Q$ achieves its minimum at $(x_0,y_0,X_0,t_0)$. If $t_0=0$, the conclusion follows directly. So, we can suppose that $t_0>0$. Moreover, note that, by Brascamp-Lieb \cite{BL} (See also \cite{CS,Ko,SWYY}), $m>0$. We will prove the conclusion in the following two cases.

(i) When $x_0\neq y_0$, let $e_1,e_2,\cdots,e_n=N(x_0,y_0)$ be an orthonormal basis of $\R^n$, $r_0=r(x_0,y_0)$ and $\theta(s)=\theta(s,x_0,y_0)$. Let $E_i=(e_i,e_i)$ and $\tilde E_i=(e_i,-e_i)$ for $i=1,2,\cdots,n$. Then, by that $$\nabla_{E_i}Q(x_0,y_0,X_0,t_0)=\nabla_{\tilde E_i}Q(x_0,y_0,X_0,t_0)=0,$$ (1),(2),(3) Lemma \ref{lm-com}, Theorem \ref{thm-1-order-1} and Theorem \ref{thm-1-order-2}, we have the follows:
\begin{equation}\label{eqn-first-1}
E_{f_i}(x_0,y_0,t_0)=0
\end{equation}
where $f_i$ means $\nabla_{e_i}f$, for $i=1,2,\cdots,n$,
\begin{equation}\label{eqn-first-2}
\int_0^{r_0}(sf_{nni}+2f_{in})(\theta(s),t_0)ds=0
\end{equation}
for $i=1,2,\cdots,n-1$, and
\begin{equation}\label{eqn-first-3}
f_{nn}(y_0,t_0)=m\psi_s(r_0/2,t_0)/2.
\end{equation}

Furthermore, by \eqref{eqn-first-2},
\begin{equation}\label{eqn-first-4}
\begin{split}
0=&\int_0^{r_0}sf_{inn}(\theta(s),t_0)ds+2\int_0^{r_0}f_{in}(\theta(s),t_0)ds\\
=&\int_0^{r_0}f_{inn}(\theta(s),t_0)\int_0^s d\sigma ds+2\int_0^{r_0}f_{in}(\theta(s),t_0)ds\\
=&\int_0^{r_0}d\sigma\int_\sigma^{r_0}f_{inn}(\theta(s),t_0)ds+2\int_0^{r_0}f_{in}(\theta(s),t_0)ds\\
=&\int_0^{r_0}(f_{in}(y_0,t_0)-f_{in}(\theta(\sigma),t_0))d\sigma+2\int_0^{r_0}f_{in}(\theta(s),t_0)ds\\
=&r_0f_{in}(y_0,t_0)+\int_0^{r_0}f_{in}(\theta(s),t_0)dt\\
\end{split}
\end{equation}
for $i=1,2,\cdots,n-1$.

Then, by \eqref{eqn-first-1},\eqref{eqn-first-3} and \eqref{eqn-first-4},
\begin{equation}\label{eqn-first-5}
\begin{split}
&\frac{1}{2}E_{f_i^2}(x_0,y_0,t_0)\\
=&\int_0^{r_0}f_{inn}f_i(\theta(s),t_0)dt+\int_0^{r_0} f_{in}^2(\theta(s),t_0)ds\\
=&\int_0^{r_0}f_{inn}(\theta(s),t_0)\int_0^sf_{in}(\theta(\sigma),t_0)d\sigma ds-f_{in}(x_0,t_0)E_{f_i}(x_0,y_0,t_0)+\\
&\int_0^{r_0}f_{in}^2(\theta(s),t_0)ds\\
=&\int_0^{r_0}f_{in}(\theta(\sigma),t_0)d\sigma\int_\sigma^{r_0}f_{inn}(\theta(s),t_0)ds+\int_0^{r_0}f_{in}^2(\theta(s),t_0)ds\\
=&\int_0^{r_0}f_{in}(\theta(s),t_0)(f_{in}(y_0,t_0)-f_{in}(\theta(s),t_0))ds+\int_0^{r_0}f_{in}^2(\theta(s),t_0)ds\\
=&f_{in}(y_0,t_0)\int_0^{r_0}f_{in}(\theta(s),t_0)ds\\
=&-r_0f_{in}(y_0,t_0)^2\leq 0
\end{split}
\end{equation}
for $i=1,2,\cdots,n-1$, and
\begin{equation}\label{eqn-first-6}
\begin{split}
&\frac{1}{2}E_{f_n^2}(x_0,y_0,t_0)\\
=&\int_0^{r_0}f_{nnn}f_n(\theta(s),t_0)ds+\int_0^{r_0} f_{nn}^2(\theta(s),t_0)ds\\
=&\int_0^{r_0}f_{nnn}(\theta(s),t_0)\int_0^s f_{nn}(\theta(\sigma),t_0)d\sigma ds+\int_0^{r_0}f_{nn}^2(\theta(s),t_0)ds\\
=&\int_0^{r_0}f_{nn}(\theta(\sigma),t_0)\int_\sigma^{r_0}f_{nnn}(\theta(s),t_0)dsd\sigma+\int_0^{r_0}f_{nn}^2(\theta(s),t_0)ds\\
=&\int_0^{r_0}f_{nn}(\theta(\sigma),t_0)(f_{nn}(y_0,t_0)-f_{nn}(\theta(\sigma),t_0))d\sigma+\int_0^{r_0}f_{nn}^2(\theta(s),t_0)ds\\
=&f_{nn}(y_0,t_0)E_f(x_0,y_0,t_0)\\
=&\frac{m^2\psi_s\psi}{2}(r_0/2,t_0).
\end{split}
\end{equation}

Moreover, by (1),(4) in Lemma \ref{lm-com} and Theorem \ref{thm-2-oder-1},
\begin{equation}
\begin{split}\label{eqn-second-1}
0\leq&\nabla_{E_i}\nabla_{E_i}Q(x_0,y_0,X_0,t_0)=\frac{\nabla_{E_i}\nabla_{E_i}E_f(x_0,y_0,t_0)}{\psi(r_0/2,t_0)}=\frac{E_{f_{ii}}(x_0,y_0,t_0)}{\psi(r_0/2,t_0)}
\end{split}
\end{equation}
for $i=1,2,\cdots,n-1$.

By (3),(5) in Lemma \ref{lm-com}, \eqref{eqn-first-1}, \eqref{eqn-first-3}, Theorem \ref{thm-1-order-2} and Theorem \ref{thm-2-order-2},
\begin{equation}\label{eqn-second-2}
\begin{split}
0\leq\nabla_{\tilde E_n}\nabla_{\tilde E_n}Q(x_0,y_0,X_0,t_0)=\frac{E_{f_{nn}}(x_0,y_0,t_0)}{\psi(r_0/2,t_0)}-\frac{m\psi_{ss}}{\psi}(r_0/2,t_0)\\
\end{split}
\end{equation}
Furthermore,
\begin{equation}\label{eqn-second-3}
\begin{split}
0\geq\frac{\partial Q}{\partial t}(x_0,y_0,X_0,t_0)=\frac{E_{f_t}(x_0,y_0,t_0)}{\psi(r_0/2,t_0)}-\frac{m\psi_t}{\psi}(r_0/2,t_0)
\end{split}
\end{equation}
Combining \eqref{eqn-second-1},\eqref{eqn-second-2}, and \eqref{eqn-second-3}, and using \eqref{eqn-first-5} and \eqref{eqn-first-6},
\begin{equation}
\begin{split}
0\leq& E_{\Delta f-f_t}(x_0,y_0,t_0)-m(\psi_{ss}-\psi_t)(r_0/2,t_0)\\
=&E_{|\nabla f|^2-q}(x_0,y_0,t_0)-m(\psi_{ss}-\psi_t)(r_0/2,t_0)\\
\leq&\left(m^2\psi_s\psi-m(\psi_{ss}-\psi_t)-\varphi\right)(r_0/2,t_0).\\
\end{split}
\end{equation}
Therefore,
\begin{equation}
m\geq \frac{\psi_{ss}-\psi_t+\sqrt{(\psi_{ss}-\psi_t)^2+4\varphi\psi_{s}\psi}}{2\psi_s\psi}(r_0/2,t_0).
\end{equation}
(2) When $x_0=y_0\in\Omega$, then we know that $X_0$ is the eigenvector of $\nabla^2f(x_0,t_0)$ with minimal eigenvalue. Hence we can choose a orthonormal frame $e_1,e_2,\cdots,e_n=X_0$ such that
\begin{equation}\label{eqn-fij}
f_{ij}(x_0,t_0)=0
\end{equation}
when $i\neq j$ and $m=\frac{2f_{nn}(x_0,t_0)}{\psi_s(0,t_0)}$. It is clear that $$f_{nn}(x_0,t)\leq f_{nn}(x,t_0)$$
 for all $x\in \Omega$ and $t\in [0,T]$. Therefore, the first order necessary condition for minimums implies that
\begin{equation}\label{eqn-fnni}
f_{nni}(x_0)=0\ \mbox{for}\ i=1,2,\cdots,n,
\end{equation}
and
\begin{equation}\label{eqn-fnnt}
\frac{\partial f_{nn}}{\partial t}(x_0,t_0)\leq 0.
\end{equation}
The second order necessary condition for minimums implies that
\begin{equation}\label{eqn-fnnii}
0\leq f_{nnii}(x_0,t_0)\ \mbox{for}\ i=1,2,\cdots,n.
\end{equation}
Moreover, let
$$g(s)=Q(x_0-se_n,x_0+se_n,e_n,t_0).$$
It is clear the $g(0)$ is the minimum of $g$. The Taylor expansion of $g$ is as follows.
\begin{equation}
\begin{split}
g(s)=&\frac{\int_{-s}^{s}f_{nn}(x_0+\sigma e_n,t_0)d\sigma}{\psi(s,t_0)}\\
=&\frac{2f_{nn}(x_0,t_0)+\frac{1}{3}f_{nnnn}(x_0,t_0)s^2+\cdots}{\psi_s(0,t_0)+\frac{1}{6}\psi_{sss}(0,t_0)s^2+\cdots}\\
=&m+\frac{1}{3\psi_s(0,t_0)}(f_{nnnn}(x_0,t_0)-m\psi_{sss}(0,t_0)/2)t^2+\cdots
\end{split}
\end{equation}
So,
\begin{equation}\label{eqn-fnnnn}
0\leq f_{nnnn}(x_0,t_0)-m\psi_{sss}(0,t_0)/2.
\end{equation}
Therefore, by combining \eqref{eqn-fij},\eqref{eqn-fnni},\eqref{eqn-fnnt},\eqref{eqn-fnnii} and \eqref{eqn-fnnnn},
\begin{equation}
\begin{split}
0\leq& (\Delta f-f_t)_{nn}(x_0,t_0)-m\psi_{sss}(0,t_0)/2\\
\leq&  2f_{nn}(x_0)^2-m\psi_{sss}(0,t_0)/2\\
=&m^2\psi_s^2(0,t_0)/2-m\psi_{sss}(0,t_0)/2
\end{split}
\end{equation}
Hence
\begin{equation}
m\geq \frac{\psi_{sss}(0,t_0)}{\psi_{s}^2(0,t_0)}.
\end{equation}

Finally, by noting that
\begin{equation}
\lim_{s\to 0}\frac{\psi_{ss}-\psi_t+\sqrt{(\psi_{ss}-\psi_t)^2+4\varphi\psi_{s}\psi}}{2\psi_s\psi}(s,t)=\frac{\psi_{sss}(0,t)}{\psi_{s}^2(0,t)},
\end{equation}
we get the conclusion.
\end{proof}
As a corollary, we have the following comparison of log-concavity for positive solutions of the heat equation. Similar results presented in different forms can also be found in \cite{AC,Ni}.
\begin{cor}\label{cor-comp-logconcavity}
Let $\Omega$ be a strictly convex domain in $\R^n$ with smooth boundary and  diameter $D$. Let $q(x,t)\in C^\infty(\ol \Omega\times[0,T])$ and $u(x,t)$ be a positive solution of the heat equation
\begin{equation}
\pd{u}{t}-\Delta u+qu=0
\end{equation}
on $\Omega$ with Dirichlet boundary data. Let $f(x,t)=-\log u(x,t)$. Let $\br q(s,t)$ be a smooth even convex function in $s$ on $[-D/2,D/2]\times[0,T]$.
Let $\bar u(s,t)$ be a positive solution of the heat equation
\begin{equation}
\pd{\bar u}{t}-\pd{^2\bar u}{s^2}+\br q\br u=0
\end{equation}
on $[-D/2,D/2]$ with Dirichlet boundary data and $\bar u(s,0)$ strictly log-concave and even in $(-D/2,D/2)$. Let $\bar f(s,t)=-\log\br u(s,t)$. Suppose that
\begin{equation}
E_q(x,y,t)\geq E_{\bar q}(-r(x,y)/2,r(x,y)/2,t)=2\bar q_s(r(x,y)/2,t),
\end{equation}
and
\begin{equation}
E_f(x,y,0)\geq E_{\bar f}(-r(x,y)/2,r(x,y)/2,0)=2\bar f_s(r(x,y)/2,0).
\end{equation}
Then
\begin{equation}
E_f(x,y,t)\geq E_{\bar f}(-r(x,y)/2,r(x,y)/2,t)=2\bar f_s(r(x,y)/2,t)
\end{equation}
for any $x,y\in\Omega$ and $t\in [0,T]$.
\end{cor}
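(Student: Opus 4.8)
I would deduce the corollary from the argument proving Theorem~\ref{thm-main}, run with the specific choice
$$\psi(s,t)=2\bar f_s(s,t),\qquad \varphi(s,t)=2\bar q_s(s,t),\qquad m_0=1 .$$
First I would record the structural facts. Since $\bar u(\cdot,t)$ is even in $s$, so is $\bar f(\cdot,t)$; hence $\bar f_s$ and $\bar f_{sss}$ are odd, which gives $\psi(0,t)=\psi_{ss}(0,t)=0$. Since $\bar q(\cdot,t)$ is even and convex, $\bar q_s$ is odd and nondecreasing, so $\bar q_s\ge 0$ on $[0,D/2]$ and $\varphi\ge 0$. The requirement $\psi_s=2\bar f_{ss}>0$ is exactly strict log-concavity of $\bar u(\cdot,t)$; this holds at $t=0$ by hypothesis and is propagated to all $t\in[0,T]$ by the classical preservation of log-concavity for one-dimensional Dirichlet heat flow with convex potential (\cite{BL}; see also \cite{CS,Ko,SWYY}). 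Finally, the two standing hypotheses of the corollary say precisely $E_q(x,y,t)\ge\varphi(r(x,y)/2,t)$ and $E_f(x,y,0)\ge m_0\psi(r(x,y)/2,0)$, matching the hypotheses of Theorem~\ref{thm-main}.

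The computational heart is to show that for this choice the number $m$ in \eqref{eqn-m} equals $1$. From $\bar u_t-\bar u_{ss}+\bar q\bar u=0$ one gets $\bar f_t=\bar f_{ss}-\bar f_s^2+\bar q$; differentiating in $s$ and doubling produces the identity
$$\psi_{ss}-\psi_t-\psi\psi_s+\varphi=0 .$$
Hence $\psi_{ss}-\psi_t=\psi\psi_s-\varphi$, so $(\psi_{ss}-\psi_t)^2+4\varphi\psi\psi_s=(\psi_{ss}-\psi_t-2\psi\psi_s)^2$, and since $\psi_{ss}-\psi_t-2\psi\psi_s=-\psi\psi_s-\varphi\le 0$ the square root is $\psi\psi_s+\varphi$ and the fraction in \eqref{eqn-m} equals $1$ for every $s>0$, with limit $1$ as $s\to 0$. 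Thus $m=\min\{1,1\}=1$, and the argument of Theorem~\ref{thm-main} yields $E_f(x,y,t)\ge\psi(r(x,y)/2,t)=2\bar f_s(r(x,y)/2,t)$; the stated equalities $E_{\bar f}(-r/2,r/2,t)=\int_{-r/2}^{r/2}\bar f_{ss}\,d\sigma=2\bar f_s(r/2,t)$ and $E_{\bar q}(-r/2,r/2,t)=2\bar q_s(r/2,t)$ are immediate from oddness of $\bar f_s$ and $\bar q_s$.

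The one place where Theorem~\ref{thm-main} does not literally apply — and which I expect to be the main obstacle — is that $\psi=2\bar f_s$ is not in $C^\infty([0,D/2]\times[0,T])$: since $\bar u$ vanishes at $\pm D/2$ one has $\psi(s,t)\to+\infty$ as $s\to(D/2)^-$. So its proof must be re-run with this $\psi$, and the only step that used boundedness of $\psi$ — that the minimizing sequence $(x_k,y_k,X_k,t_k)$ for $Q=E_f/\psi$ cannot limit onto $\{x_0\in\partial\Omega\}\cup\{y_0\in\partial\Omega\}$ — must be re-examined. When such a limit point has $r(x_0,y_0)<D$, then $\psi(r_k/2,t_k)$ stays bounded while $E_f(x_k,y_k,t_k)\to+\infty$ by Theorem~\ref{thm-bd}, so $Q\to+\infty$ exactly as before. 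The only new case is $x_0,y_0\in\partial\Omega$ with $r(x_0,y_0)=D$, i.e.\ endpoints of a diameter; there the segment $\overline{x_0y_0}$ is normal to $\partial\Omega$ at both ends, so, writing $d(\cdot)$ for the distance to $\partial\Omega$, one has $D-r(x_k,y_k)=d(x_k)+d(y_k)+o(d(x_k)+d(y_k))$, hence $\psi(r_k/2,t_k)\sim 4/(d(x_k)+d(y_k))$, while using $E_f(x,y,t)=\langle\nabla f(y,t)-\nabla f(x,t),N(x,y)\rangle$ together with the Hopf-lemma asymptotics $\nabla(-\log u)(z,t)\sim\nu/d(z)$ near $\partial\Omega$ one gets $E_f(x_k,y_k,t_k)\sim d(x_k)^{-1}+d(y_k)^{-1}$. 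Therefore
$$\liminf_{k\to\infty}Q(x_k,y_k,X_k,t_k)\ \ge\ \liminf_{k\to\infty}\frac{\big(d(x_k)^{-1}+d(y_k)^{-1}\big)\big(d(x_k)+d(y_k)\big)}{4}=\liminf_{k\to\infty}\frac{2+d(x_k)/d(y_k)+d(y_k)/d(x_k)}{4}\ \ge\ 1$$
by the arithmetic--geometric mean inequality. So $\liminf Q\ge 1$ in every boundary case; combined with $m=1$ in the interior-minimum and $t=0$ cases, this forces $\inf_S Q\ge 1$, i.e.\ $E_f(x,y,t)\ge 2\bar f_s(r(x,y)/2,t)$ on $\Omega\times\Omega\times[0,T]$. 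The two steps I expect to need genuine care are this diametric boundary estimate (forced by the blow-up of $\psi$) and the preservation of strict log-concavity of $\bar u(\cdot,t)$.
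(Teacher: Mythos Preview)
Your approach is genuinely different from the paper's. Rather than re-running the proof of Theorem~\ref{thm-main} with the singular barrier $\psi=2\bar f_s$ and confronting the resulting diametric boundary case, the paper sidesteps the singularity entirely by an approximation: it extends $\bar q$ evenly and convexly to a larger interval, solves the one-dimensional problem on $[-r_kD/2,r_kD/2]$ with $r_k\searrow 1$ and rescaled initial data $\bar u(s/r_k,0)$, sets $\psi^{(k)}=2(-\log u^{(k)})_s$, and observes that $\psi^{(k)}$ is now smooth on $[0,D/2]$. The same identity $\psi^{(k)}_{ss}-\psi^{(k)}_t=\psi^{(k)}\psi^{(k)}_s-\varphi$ that you found forces $m=1$, so Theorem~\ref{thm-main} applies as a black box; then one lets $k\to\infty$. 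This buys a clean invocation of the theorem with no new boundary analysis whatsoever.

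Your direct route has a real gap precisely at the place you flagged as delicate. The claim
\[
D-r(x_k,y_k)=d(x_k)+d(y_k)+o\bigl(d(x_k)+d(y_k)\bigr)
\]
is false in general. Already in the unit disk with $x_0=(-1,0)$, $y_0=(1,0)$, the sequence $x_k=(-1+\epsilon,\sqrt{\epsilon})$, $y_k=(1-\epsilon,0)$ gives $D-r(x_k,y_k)\sim 7\epsilon/4$ while $d(x_k)+d(y_k)\sim 3\epsilon/2$, so the ratio tends to $7/6$, not $1$. Consequently your asymptotic $\psi(r_k/2,t_k)\sim 4/(d(x_k)+d(y_k))$ is incorrect, and the tidy AM--GM step does not go through as written. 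A correct analysis of $\liminf Q$ along diametric sequences would have to track the tangential components of $x_k-x_0$ and $y_k-y_0$ and match the second fundamental form of $\partial\Omega$ at the endpoints against the boundary behaviour of $\bar u$ at $\pm D/2$; your argument does not establish that the limit is always $\ge 1$. The paper's enlarging-the-interval trick is designed exactly to avoid this computation.
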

\begin{proof}Because $f(s,t)$ blows up at $s=D/2$, we apply a similar trick in \cite{Ni} by enlarging $D$.

Let $\tilde q$ be a smooth extension of $\bar q$ on $[-D,D]\times [0,T]$ which is still even and convex. Let $\{r_k\}$ be a strictly decreasing sequence of real numbers tending to $1$. Let $u^{(k)}(s,t)$ be the solution of the following boundary value problems:
\begin{equation}
\left\{\begin{array}{l}u^{(k)}_t-u^{(k)}_{ss}+\tilde qu^{(k)}=0\\
u^{(k)}(s,0)=\bar u(s/r_k,0)\\
u^{(k)}(\pm r_k D/2,t)=0.
\end{array}\right.
\end{equation}
Let $f^{(k)}=-\log u^{(k)}$, $\varphi(s,t)=2\bar q_s(s,t)$
and
 $$\psi^{(k)}(s,t)=2f_s^{(k)}(s,t).$$
Note that
\begin{equation}
\psi^{(k)}(s,0)=\frac{2}{r_k}\bar f_s(s/r_k,0)\leq 2\bar f_s(s,0),
\end{equation}
since $\bar f(\cdot,0)$ is even and convex. So,
$$E_f(x,y,0)\geq \psi^{(k)}(r(x,y)/2,0).$$

Then, by Theorem \ref{thm-main} and the equation
$$\psi^{(k)}_{ss}-\psi_t^{(k)}=\psi^{(k)}\psi^{(k)}_s-\varphi$$
on $[0,D/2]$, we know that
$$E_f(x,y,t)\geq \psi^{(k)}(r(x,y)/2,t).$$
By letting $k\to\infty$ in the last inequality, we get the conclusion.
\end{proof}
Applying the last corollary to heat kernel, one can obtain a comparison of log-concavity for heat kernels. The result is also presented in a different form in \cite{Ni}.
\begin{cor}\label{cor-comp-kernel}
Let $\Omega$ be a strictly convex domain in $\R^n$ with smooth boundary and diameter $D$. Let $q(x)\in C^\infty(\ol \Omega)$ and $H(x,z,t)$ be  the heat kernel for
\begin{equation}
\pd{u}{t}-\Delta u+qu=0
\end{equation}
on $\Omega$ with Dirichlet boundary data. Let $\bar q(s)$ be a smooth even convex function  $[-D/2,D/2]$ and
$\ol H(s,\sigma,t)$ be the heat kernel of the heat equation
\begin{equation}
\pd{\bar u}{t}-\pd{^2\bar u}{s^2}+\br q\br u=0
\end{equation}
on $[-D/2,D/2]$ with Dirichlet boundary data. Suppose that
\begin{equation}\label{eqn-comp-heat}
E_q(x,y)\geq E_{\bar q}(-r(x,y)/2,r(x,y)/2)=2\bar q_s(r(x,y)/2).
\end{equation}
Then
\begin{equation}
E_{-\log H(\cdot,z,\cdot)}(x,y,t)\geq E_{-\log\ol H(\cdot,0,\cdot)}(-r(x,y)/2,r(x,y)/2,t)
\end{equation}
for any $x,y,z\in\Omega$ and $t>0$.
\end{cor}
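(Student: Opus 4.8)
The plan is to deduce the statement from Corollary~\ref{cor-comp-logconcavity} by a mollification (``blow-up of the initial data'') argument: replace the two heat kernels by the Dirichlet heat flows of suitable approximate identities, for which the hypotheses of Corollary~\ref{cor-comp-logconcavity} can be checked, and then pass to the limit. Fix $z\in\Omega$ and $T>0$; as $z$ and $T$ are arbitrary it suffices to prove the asserted inequality for $t\in(0,T]$. Let $\phi_1\in C^\infty(\ol\Omega)$ be the positive first Dirichlet eigenfunction of $-\Delta+q$ on $\Omega$, and $\bar\phi_1$ the positive first Dirichlet eigenfunction of $-\p_s^2+\bar q$ on $[-D/2,D/2]$; since $\bar q$ is even, $\bar\phi_1$ is even, and by Brascamp--Lieb~\cite{BL} it is log-concave. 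For $\delta>0$ put $\mu_\delta(x)=\phi_1(x)e^{-|x-z|^2/(2\delta)}$ on $\ol\Omega$ and $\bar\rho_\delta(s)=\bar\phi_1(s)e^{-s^2/(2\delta)}$ on $[-D/2,D/2]$. Both are smooth, positive in the interior and vanish on the lateral boundary; $\bar\rho_\delta$ is even, and $-\log\bar\rho_\delta=-\log\bar\phi_1+s^2/(2\delta)$ is strictly convex, so $\bar\rho_\delta$ is strictly log-concave. Let $u_\delta$ (resp.\ $\bar u_\delta$) be the positive solution of $\p_t u-\Delta u+qu=0$ on $\Omega$ (resp.\ of $\p_t\bar u-\p_s^2\bar u+\bar q\,\bar u=0$ on $[-D/2,D/2]$) with Dirichlet data and initial datum $\mu_\delta$ (resp.\ $\bar\rho_\delta$), and set $f_\delta=-\log u_\delta$, $\bar f_\delta=-\log\bar u_\delta$.

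Next I would verify the hypotheses of Corollary~\ref{cor-comp-logconcavity} for the pair $(u_\delta,\bar u_\delta)$. As $q$ and $\bar q$ are independent of $t$, the required potential comparison is exactly the hypothesis~\eqref{eqn-comp-heat}, and $\bar u_\delta(\cdot,0)=\bar\rho_\delta$ is strictly log-concave and even. For the initial energy comparison, observe that $E_{|x-z|^2/(2\delta)}(x,y)=r(x,y)/\delta$, the Hessian of $|x-z|^2/(2\delta)$ being $\delta^{-1}I$; hence, by the linearity of the line segment energy in the tensor, together with $f_\delta(\cdot,0)=-\log\phi_1+|x-z|^2/(2\delta)$ and $\bar f_\delta(\cdot,0)=-\log\bar\phi_1+s^2/(2\delta)$,
\[
E_{f_\delta}(x,y,0)-2(\bar f_\delta)_s(r(x,y)/2,0)=E_{-\log\phi_1}(x,y)-2(-\log\bar\phi_1)_s(r(x,y)/2),
\]
and the right-hand side is nonnegative by the log-concavity comparison for first eigenfunctions --- the elliptic analogue of Corollary~\ref{cor-comp-logconcavity} obtained from Theorem~\ref{thm-main-elliptic} --- whose hypothesis is again~\eqref{eqn-comp-heat}. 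Corollary~\ref{cor-comp-logconcavity} then yields
\[
E_{f_\delta}(x,y,t)\ge 2(\bar f_\delta)_s(r(x,y)/2,t)\qquad\text{for all }x,y\in\Omega,\ t\in[0,T].
\]

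Finally I would let $\delta\to0^{+}$. Writing $u_\delta(x,t)=\int_\Omega H(x,w,t)\phi_1(w)e^{-|w-z|^2/(2\delta)}\,dw$, the Gaussian factor (suitably normalized) is an approximate identity at $w=z$, so by interior parabolic estimates $(2\pi\delta)^{-n/2}u_\delta\to\phi_1(z)\,H(\cdot,z,\cdot)$ in $C^\infty_{\mathrm{loc}}(\Omega\times(0,T])$; consequently $\nabla^2_x f_\delta(\cdot,t)\to\nabla^2_x(-\log H(\cdot,z,t))$ uniformly on the segment $\ol{xy}$, a fixed compact subset of $\Omega$ by convexity, and therefore $E_{f_\delta}(x,y,t)\to E_{-\log H(\cdot,z,\cdot)}(x,y,t)$. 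In the same way $(2\pi\delta)^{-1/2}\bar u_\delta\to\bar\phi_1(0)\,\bar H(\cdot,0,\cdot)$ in $C^\infty_{\mathrm{loc}}((-D/2,D/2)\times(0,T])$, and since $r(x,y)/2<D/2$ whenever $x,y\in\Omega$, we get $(\bar f_\delta)_s(r(x,y)/2,t)\to(-\log\bar H(\cdot,0,\cdot))_s(r(x,y)/2,t)=\tfrac12 E_{-\log\bar H(\cdot,0,\cdot)}(-r(x,y)/2,r(x,y)/2,t)$. Passing to the limit in the previous display gives the conclusion. I expect the crux to be the initial-time comparison --- recognizing that the natural boundary-vanishing mollifiers are $\phi_1$, resp.\ $\bar\phi_1$, times a Gaussian, so that after the common Gaussian contributions cancel, the comparison at $t=0$ is exactly the eigenfunction log-concavity comparison; this is what forces the appeal to Brascamp--Lieb and to the elliptic theorem. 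The $\delta\to0^{+}$ passage is routine parabolic regularity, but it is what makes it legitimate to carry Hessians and $s$-derivatives through the limit, so it should be done with care.
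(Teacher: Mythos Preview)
Your argument is correct but takes a genuinely different route from the paper. The paper does not mollify the initial data; instead it rescales the one-dimensional problem to a slightly larger interval $[-r_kD/2,r_kD/2]$ with $r_k\searrow 1$, so that the rescaled kernel $H^{(k)}(s,t)=\ol H(s/r_k,0,t/r_k^2)$ is smooth up to $s=\pm D/2$ at any fixed positive time $\delta_k$. The initial-time comparison is then obtained from the Malliavin--Stroock short-time asymptotics \cite{MS}: as $\epsilon\to0$ the Hessian of $-\log H(\cdot,z,\epsilon)$ blows up like $(2\epsilon)^{-1}I$, so $E_{-\log H}(x,y,\epsilon_k)$ dominates the bounded quantity $2(-\log H^{(k)})_s(r(x,y)/2,\delta_k)$ once $\epsilon_k$ is small enough; Theorem~\ref{thm-main} is then applied directly (not Corollary~\ref{cor-comp-logconcavity}), and one lets $k\to\infty$. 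Your approach instead trades the Malliavin--Stroock black box for the elliptic eigenfunction comparison: by weighting the Gaussian bumps with the first eigenfunctions $\phi_1,\bar\phi_1$, the Gaussian contributions to the $t=0$ line-segment energies cancel exactly, reducing the initial comparison to the last corollary of the paper. That corollary is derived from Theorem~\ref{thm-main-elliptic}, so there is no circularity, only a reordering of logical dependencies. In short, the paper keeps the parabolic chain self-contained at the price of an external analytic input, while your route is more elementary but leans on the elliptic theory already developed in the paper.
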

\begin{proof} Let $\{r_k\}$ be a strictly decreasing sequence of real numbers tending to $1$, $q^{(k)}(s)=\frac{1}{r_k^2}\bar q(s/r_k)$ and
\begin{equation}
H^{(k)}(s,t)=\ol H(s/r_k,0,t/r_k^2).
\end{equation}
Then
\begin{equation}
H^{(k)}_{t}-H^{(k)}_{ss}+q^{(k)}H^{(k)}=0.
\end{equation}
Let $\{\delta_k\}$ be a strictly decreasing sequence of real numbers tending to $0$. By the short time asymptotic behavior of heat kernel by Malliavin and Stroock \cite{MS}, for each $\delta_k>0$, there is a $\epsilon_k>0$ small enough, such that
\begin{equation}
\begin{split}
E_{-\log H(\cdot,z,\cdot)}(x,y,\epsilon_k)\geq &E_{-\log \ol H^{(k)}}(-r(x,y)/2,r(x,y)/2,\delta_k)\\
=&2(-\log H^{(k)})_{s}(s,\delta_k)
\end{split}
\end{equation}
for any $x,y\in \Omega$. Moreover, note that
\begin{equation}
E_q(x,y)\geq 2\bar q_s(r(x,y)/2)\geq 2q^{(k)}_s(r(x,y)/2)
\end{equation}
since $q$ is even and convex.

Let $\psi^{(k)}(s,t)=2(-\log H^{(k)})_{s}(s,t+\delta_k)$ and $\varphi^{(k)}(s,t)=2q^{(k)}_s(s).$ Then, by Theorem \ref{thm-main} and that
\begin{equation}
\psi^{(k)}_{ss}-\psi^{(k)}_t=\psi_s^{(k)}\psi^{(k)}-\varphi^{(k)},
\end{equation}
\begin{equation}
\begin{split}
E_{-\log H(\cdot,z,\cdot)}(x,y,t+\epsilon_k)\geq& \psi^{(k)}(r(x,y)/2,t)\\
=&E_{-\log \ol H^{(k)}}(-r(x,y)/2,r(x,y)/2,t+\delta_k).
\end{split}
\end{equation}
Let $k\to \infty$ in the last inequality. We obtain the conclusion.
\end{proof}

By a similar argument as in the proof of Theorem \ref{thm-main}, we have the following elliptic version of Theorem \ref{thm-main}.
\begin{thm}\label{thm-main-elliptic}
Let $\Omega$ be a bounded strictly convex domain in $\R^n$ with smooth boundary and diameter $D$, $q(x)\in C^\infty(\overline \Omega)$ and $u(x)$ be a nonnegative first eigenfunction of the operator $\Delta-q$ on $\Omega$ with Dirichlet boundary data. Let $\psi(s)\in C^\infty([0,D/2])$ with $\psi(0)=0$, $\psi_{ss}(0)=0$ and $\psi_s>0$. Suppose that
\begin{equation}
E_q(x,y)\geq \varphi(r(x,y)/2)
\end{equation}
where $\varphi$ is a nonnegative function on $[0,D/2]$,
Then
\begin{equation}
E_f(x,y)\geq m\psi(r(x,y)/2)
\end{equation}
for any $x,y\in \Omega$, where $f=-\log u$ and
\begin{equation}\label{eqn-m}
m=\min_{s\in [0,D/2]}\frac{\psi_{ss}+\sqrt{\psi_{ss}^2+4\varphi\psi\psi_s}}{2\psi_s\psi}.
\end{equation}
\end{thm}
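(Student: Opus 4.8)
The plan is to run the argument of Theorem~\ref{thm-main} with the time variable suppressed. I would first record the elliptic form of the key identity: if $\lambda$ is the eigenvalue attached to $u$, so that $\Delta u-qu=-\lambda u$, then $f=-\log u$ satisfies $\Delta f=|\nabla f|^2-q+\lambda$, and because $\lambda$ is a constant it is annihilated by $\nabla^2$, so $E_{\Delta f}(x,y)=E_{|\nabla f|^2-q}(x,y)$ for all $x,y\in\Omega$ — the stationary analogue of the identity $E_{\Delta f-f_t}=E_{|\nabla f|^2-q}$ that drives the proof of Theorem~\ref{thm-main}. I would also note that $u>0$ in $\Omega$, $u|_{\partial\Omega}=0$, and $\frac{\partial u}{\partial\nu}\big|_{\partial\Omega}<0$ by the Hopf lemma, so $u$ is a defining function of $\Omega$ and Lemma~\ref{lm-bd} and Theorem~\ref{thm-bd} apply to $f$.

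I would then set $S=\{(x,y,X)\in\Omega\times\Omega\times\mathbb{S}^{n-1}\ |\ X=N(x,y)\text{ if }x\neq y\}$ and define $Q$ on $S$ by $Q(x,y,X)=E_f(x,y)/\psi(r(x,y)/2)$ when $x\neq y$ and $Q(x,x,X)=2\nabla^2 f(x)(X,X)/\psi_s(0)$, and put $m=\inf_S Q$; evaluating $Q$ at a diagonal point gives $m<+\infty$. For a minimizing sequence $(x_k,y_k,X_k)\to(x_0,y_0,X_0)$, Lemma~\ref{lm-bd} and Theorem~\ref{thm-bd} force $Q(x_k,y_k,X_k)\to+\infty$ whenever $x_0\in\partial\Omega$ or $y_0\in\partial\Omega$, which is impossible; hence $(x_0,y_0)\in\Omega\times\Omega$ and, by continuity of $Q$, the infimum is attained at the interior point $(x_0,y_0,X_0)$. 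That $m>0$ I would get from the strict log-concavity of the first eigenfunction (which makes $f$ strictly convex and $Q>0$ on $S$) — the elliptic counterpart of the Brascamp--Lieb input \cite{BL} (see also \cite{CS,Ko,SWYY}) used in Theorem~\ref{thm-main}.

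The core is then the interior analysis at $(x_0,y_0,X_0)$, in the two cases of the proof of Theorem~\ref{thm-main}. For $x_0\neq y_0$, I would use the vanishing of $\nabla_{E_i}Q$, $\nabla_{\tilde E_i}Q$ and the signs $\nabla_{E_i}\nabla_{E_i}Q\geq0$ $(i<n)$, $\nabla_{\tilde E_n}\nabla_{\tilde E_n}Q\geq0$ — now with no $\psi_t$ or $\partial_t Q$ terms — fed through Lemma~\ref{lm-com} and Theorems~\ref{thm-1-order-1}--\ref{thm-2-order-2}, reproducing \eqref{eqn-first-1}--\eqref{eqn-second-2} verbatim with $t_0$ deleted. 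This gives $E_{\Delta f}(x_0,y_0)\geq m\psi_{ss}(r_0/2)$ on one side, and, via $E_{\Delta f}=E_{|\nabla f|^2-q}=\sum_i E_{f_i^2}-E_q$ together with \eqref{eqn-first-5}, \eqref{eqn-first-6} and the hypothesis $E_q(x_0,y_0)\geq\varphi(r_0/2)$, the bound $E_{\Delta f}(x_0,y_0)\leq(m^2\psi_s\psi-\varphi)(r_0/2)$ on the other; the ensuing quadratic inequality $m^2\psi_s\psi-m\psi_{ss}-\varphi\geq0$ at $s=r_0/2$, with $m>0$ and $\psi_s\psi>0$, yields $m\geq\frac{\psi_{ss}+\sqrt{\psi_{ss}^2+4\varphi\psi\psi_s}}{2\psi_s\psi}(r_0/2)$. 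For $x_0=y_0\in\Omega$, $X_0$ is a unit eigenvector of $\nabla^2 f(x_0)$ for its least eigenvalue, so I would pick $e_1,\dots,e_n=X_0$ with $f_{ij}(x_0)=0$ for $i\neq j$ and $m=2f_{nn}(x_0)/\psi_s(0)$; since $f_{nn}$ attains its minimum at $x_0$, $f_{nni}(x_0)=0$ and $f_{nnii}(x_0)\geq0$, and Taylor-expanding $s\mapsto Q(x_0-se_n,x_0+se_n,e_n)$ about its minimum at $s=0$ gives $f_{nnnn}(x_0)\geq\frac12 m\psi_{sss}(0)$. Dropping $-q_{nn}(x_0)\leq0$ — valid because $q_{nn}(x_0)\geq0$, as seen by letting $x,y\to x_0$ with $N(x,y)\to e_n$ in $E_q(x,y)\geq\varphi(r(x,y)/2)\geq0$ — leaves $0\leq 2f_{nn}(x_0)^2-\frac12 m\psi_{sss}(0)$, i.e.\ $m\geq\psi_{sss}(0)/\psi_s^2(0)=\lim_{s\to0^+}\frac{\psi_{ss}+\sqrt{\psi_{ss}^2+4\varphi\psi\psi_s}}{2\psi_s\psi}(s)$. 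In both cases $m=\inf_S Q$ is at least $\min_{[0,D/2]}\frac{\psi_{ss}+\sqrt{\psi_{ss}^2+4\varphi\psi\psi_s}}{2\psi_s\psi}$, which is the constant $m$ of the statement, so $E_f(x,y)\geq m\psi(r(x,y)/2)$ for all $x,y\in\Omega$.

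The step I expect to be the real obstacle is the positivity $m>0$: in Theorem~\ref{thm-main} the initial hypothesis on $E_f(\cdot,0)$ and the presence of $m_0$ in the formula for $m$ handle the case when the infimum is approached at $t=0$, whereas in the elliptic problem there is no time slice to fall back on, and the obvious reduction to Theorem~\ref{thm-main} — viewing $u$, up to a time factor, as a static solution of the heat equation — supplies only $m_0=0$, which forces the constant $m$ there to collapse to $0$. So $m>0$ must be imported from the known strict log-concavity of the first Dirichlet eigenfunction, exactly as in the parabolic case; the rest is a line-by-line transcription of the proof of Theorem~\ref{thm-main} with the heat operator replaced by $\Delta$ and all $t$-derivatives removed.
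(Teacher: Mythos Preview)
Your proposal is correct and follows exactly the route the paper intends: the paper gives no separate proof of Theorem~\ref{thm-main-elliptic}, stating only that it follows ``by a similar argument as in the proof of Theorem~\ref{thm-main},'' and your write-up is precisely that argument with the $t$-derivatives removed and the eigenvalue equation $\Delta f=|\nabla f|^2-q+\lambda$ replacing $\Delta f-f_t=|\nabla f|^2-q$. Your explicit justification that $q_{nn}(x_0)\geq 0$ (needed to pass from $(\Delta f)_{nn}$ to $2f_{nn}^2$ in the diagonal case) is a detail the paper leaves implicit even in the parabolic proof, and your identification of $m>0$ via strict log-concavity of the first eigenfunction matches the paper's use of \cite{BL,CS,Ko,SWYY}.
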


By similar arguments as in Corollary \ref{cor-comp-logconcavity} or Corollary \ref{cor-comp-kernel}, we can obtain the following log-concavity comparison for first eigenfunctions.
\begin{cor}
Let $\Omega$ be a bounded strictly convex domain in $\R^n$ with smooth boundary  and  diameter $D$. Let $q(x)\in C^\infty(\ol \Omega)$, $\phi$ be a nonnegative first eigenfunction of $\Delta-q$ on $\Omega$ with Dirichlet boundary data and $f=-\log\phi$. Let $\bar q(s)$ be a smooth even convex function on $[-D/2,D/2]$ such that
\begin{equation}
E_{q}(x,y)\geq E_{\bar q}(-r(x,y)/2,r(x,y)/2)=2\bar q_s(r(x,y)/2).
\end{equation}
Let $\bar\phi$ be a nonnegative first eigenfunction of $\frac{d^2}{ds^2}-\bar q(s)$ on $[-D/2,D/2]$ with Dirichlet boundary data and $\bar f=-\log\bar\phi$. Then
\begin{equation}
E_f(x,y)\geq E_{\bar f}(-r(x,y)/2,r(x,y)/2)=2\bar f_s(r(x,y)/2).
\end{equation}
\end{cor}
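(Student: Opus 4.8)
The plan is to imitate the proof of Corollary~\ref{cor-comp-logconcavity}: reduce the statement to Theorem~\ref{thm-main-elliptic} after enlarging the one-dimensional interval so that the singularity of $\bar f$ is pushed strictly beyond $s=D/2$. This enlargement is necessary because $\bar\phi$ vanishes at $\pm D/2$, so $2\bar f_s$ is not smooth up to the endpoint $D/2$ and Theorem~\ref{thm-main-elliptic} cannot be applied with $\psi=2\bar f_s$ directly.

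First I would set up the rescaled one-dimensional model. Fix a strictly decreasing sequence $\{r_k\}$ with $r_k\downarrow 1$ and put $q^{(k)}(s)=\frac{1}{r_k^2}\bar q(s/r_k)$ on $[-r_kD/2,r_kD/2]$; this is again smooth, even and convex. Let $\phi^{(k)}(s)=\bar\phi(s/r_k)$. A direct computation shows that $\phi^{(k)}$ is a nonnegative first eigenfunction of $\frac{d^2}{ds^2}-q^{(k)}$ on $[-r_kD/2,r_kD/2]$ with Dirichlet data, and that $f^{(k)}:=-\log\phi^{(k)}$ equals $\bar f(\cdot/r_k)$. Set $\psi^{(k)}(s)=2f^{(k)}_s(s)$ and $\varphi^{(k)}(s)=2q^{(k)}_s(s)$. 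Since $D/2<r_kD/2$, $\psi^{(k)}$ is smooth on $[0,D/2]$; since $\bar f$ is even, $\psi^{(k)}(0)=0$ and $\psi^{(k)}_{ss}(0)=0$; and $\psi^{(k)}_s(s)=\frac{2}{r_k^2}\bar f_{ss}(s/r_k)>0$ once $\bar\phi$ is known to be strictly log-concave on $(-D/2,D/2)$. Finally $q^{(k)}$ is even and convex, so $\varphi^{(k)}\ge 0$ on $[0,D/2]$, and for $x,y\in\Omega$, using $r_k>1$ and that $\bar q_s$ is nondecreasing on $[0,D/2]$,
$$\varphi^{(k)}(r(x,y)/2)=\frac{2}{r_k^3}\,\bar q_s\!\left(\frac{r(x,y)}{2r_k}\right)\le 2\bar q_s\!\left(\frac{r(x,y)}{2}\right)\le E_q(x,y).$$
Thus all the hypotheses of Theorem~\ref{thm-main-elliptic} are met for $\Omega,q,\psi^{(k)},\varphi^{(k)}$.

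Next I would compute the constant $m$ produced by Theorem~\ref{thm-main-elliptic} for this data and show $m=1$. From the eigenfunction equation, $h:=f^{(k)}$ satisfies the Riccati identity $h_{ss}=(h_s)^2-q^{(k)}+c_k$ for a constant $c_k$; differentiating once (so that $c_k$ drops out) gives $\psi^{(k)}_{ss}=\psi^{(k)}\psi^{(k)}_s-\varphi^{(k)}$ on $[0,D/2]$. Since $\varphi^{(k)}\ge 0$ and $\psi^{(k)}\psi^{(k)}_s>0$ on $(0,D/2]$, this identity forces
$$\left(\psi^{(k)}_{ss}\right)^2+4\varphi^{(k)}\psi^{(k)}\psi^{(k)}_s=\left(2\psi^{(k)}\psi^{(k)}_s-\psi^{(k)}_{ss}\right)^2,\qquad 2\psi^{(k)}\psi^{(k)}_s-\psi^{(k)}_{ss}\ge 0,$$
so the expression inside the minimum defining $m$ in Theorem~\ref{thm-main-elliptic} equals $1$ identically on $(0,D/2]$ and extends continuously to the value $1$ at $s=0$; hence $m=1$. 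Theorem~\ref{thm-main-elliptic} then gives
$$E_f(x,y)\ge\psi^{(k)}(r(x,y)/2)=\frac{2}{r_k}\,\bar f_s\!\left(\frac{r(x,y)}{2r_k}\right)$$
for all $x,y\in\Omega$.

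Finally I would let $k\to\infty$. For $x,y\in\Omega$ one has $r(x,y)<D$ (the diameter of a convex body is attained only at boundary points), so $r(x,y)/(2r_k)\to r(x,y)/2\in[0,D/2)$, a point of continuity of $\bar f_s$, and passing to the limit yields $E_f(x,y)\ge 2\bar f_s(r(x,y)/2)=E_{\bar f}(-r(x,y)/2,r(x,y)/2)$, as claimed. I expect the main obstacle to be the one ingredient used above without proof, the \emph{strict} log-concavity of $\bar\phi$ on $(-D/2,D/2)$ that is needed for $\psi^{(k)}_s>0$; this is the one-dimensional Brascamp--Lieb log-concavity theorem \cite{BL} upgraded to strictness, obtainable from a strong maximum principle argument for $(\log\bar\phi)_{ss}$ (or from the constant rank theorem) using the convexity of $\bar q$. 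The remaining steps---the rescaling bookkeeping, the Riccati computation, and the limiting argument---are routine.
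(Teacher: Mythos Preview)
Your proposal is correct and follows exactly the route the paper indicates (``similar arguments as in Corollary~\ref{cor-comp-logconcavity} or Corollary~\ref{cor-comp-kernel}''): you use the rescaling of Corollary~\ref{cor-comp-kernel} to push the singularity of $\bar f$ past $D/2$, verify the hypotheses of Theorem~\ref{thm-main-elliptic}, and read off $m=1$ from the Riccati identity $\psi^{(k)}_{ss}=\psi^{(k)}\psi^{(k)}_s-\varphi^{(k)}$ just as the paper does in the parabolic case. The one ingredient you flag---strict log-concavity of $\bar\phi$---is precisely what the paper cites \cite{BL,CS,Ko,SWYY} for, so nothing is missing.
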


\end{document}